\documentclass[11pt]{article}
\usepackage{amsmath,amsthm,amssymb,fullpage,enumerate,cleveref,url,titlesec}

\setcounter{secnumdepth}{4}

\newtheorem{theorem}{Theorem}[section]

\newtheorem{definition}[theorem]{Definition}

\newtheorem{question}[theorem]{Question}
\newtheorem{proposition}[theorem]{Proposition}
\newtheorem{conjecture}[theorem]{Conjecture}

\usepackage{todonotes}
\usepackage{xcolor}
\usepackage{relsize}
\usepackage{comment}
\usetikzlibrary{positioning}

\newcommand{\Q}{\mathbb{Q}}

\newcommand{\Z}{\mathbb{Z}}
\newcommand{\N}{\mathbb{N}}

\DeclareMathAccent{\widehat}{\mathord}{largesymbols}{"62}

\title{Monochromatic Sums and Products over $\mathbb{Q}$}
\author{
Ryan Alweiss
\thanks{Department of Pure Mathematics and Mathematical Statistics and Trinity College, University of Cambridge.
Email: {\tt ra699@cam.ac.uk}.
Research supported by an NSF Mathematical Sciences Postdoctoral Fellowship.}\\
}

\begin{document}
\maketitle

\textbf{MSC code}: \emph{Primary} - 05D10

\begin{abstract} 

Hindman's finite sums theorem states that in any finite coloring of the naturals, there is an infinite sequence so that all of its finite subset sums are the same color. In 1979, Hindman showed that there is a finite coloring of the naturals so that no infinite sequence has all of its pairwise sums and pairwise products the same color.  Hindman conjectured that for any $n$, a finite coloring of the naturals contains $n$ numbers all of whose subset sums and subset products are the same color.  In this paper we prove the version of this statement where we color the rationals instead of the integers.  In other words, we show that the pattern $\{ \sum_{i \in S}x_i, \prod_{i \in S}x_i \}$, where $S$ ranges over all nonempty subsets of $[n]$, is partition regular over the rationals. \end{abstract}
	
\section{Introduction}~\label{section:introduction}

One of the oldest results in Ramsey theory is Schur's theorem \cite{schur}, from 1916.  In order to prove that Fermat's last theorem is false in the integers mod $p$, Schur proved that in any finite coloring of $\mathbb{N}$, there are some $x$ and $y$ so that $x,y,x+y$ all receive the same color, i.e. that the pattern $\{x,y,x+y\}$ is \emph{partition regular}.  Just over a decade later, van der Waerden proved that in a finite coloring of $\mathbb{N}$, there are arbitrarily long monochromatic arithmetic progressions.  A few years later, Rado \cite{rado} proved a far-reaching generalization of the theorems of Schur and van der Waerden, fully characterizing which linear systems of equations are partition regular.  One important corollary of Rado's theorem is the so-called Folkman's theorem, which generalizes Schur's theorem.  Folkman's theorem states that in any finite coloring of $\mathbb{N}$, for any $n \ge 2$ there are $x_1, \cdots, x_n$ so that $\sum_{i \in S}x_i$ are all the same color, for any nonempty subset $S$ of $[n]$.  Note that $n=2$ case is just Schur's result. In 1974, Hindman \cite{fsum} proved an infinitary version of Folkman's theorem.  Hindman's Theorem states that in any finite coloring of $\mathbb{N}$, some color class contains an \emph{infinite} sequence and all its finite subset sums. This is about as general of a statement as one can possibly hope for, as far as infinitary Rado is concerned.  In particular, the infinitary versions of the generalizations of Folkman's theorem that follow from Rado's theorem are false. For instance, one cannot also ask for terms of the form $2x_i+x_j$ to be the same color (see \cite{imre1}, \cite{imre2}). While the partition regularity of linear equations is well understood, even in the infinitary case, very little is known about the partition regularity of polynomial equations in general (see e.g. \cite{bergelson}, \cite{blm}, \cite{logic}).

However, there is one class of polynomial equations for which partition regularity is very well understood.  By considering powers of $2$, one can easily characterize the product versions of these statements from the sum versions (equivalently, one can consider partition regularity for linear systems in $\log(x_i)$).  For instance, one can quickly deduce the product version of Folkman's theorem. In any finite coloring of $\mathbb{N}$, for any $n \ge 2$ there are $x_1, \cdots, x_n$ so that $\prod_{i \in S}x_i$ are all the same color, for any nonempty subset $S$ of $[n]$.  

Combining addition and multiplication, which is needed to make progress on the partition regularity of polynomials in general, is far more difficult. Perhaps the most natural questions of this form concern the simplest common generalization of the partition regularity of linear equations and their multiplicative versions. 

Over $\mathbb{N}$, Hindman showed \cite{pairwise} that one cannot even ask for an infinite sequence all of whose \emph{pairwise} sums and products are monochromatic.  Thus, the natural common generalization of Hindman's Theorem and its multiplicative version is false.  This has implications in the theory of ultrafilters; from this it follows that there is no ultrafilter $p \in \beta \mathbb{N}$ with $p+p=p \cdot p$ (\cite{book}, Corollary 17.17).  Recently, Hindman, Ivan, and Leader gave a new construction \cite{maria} of a coloring without such an infinite sequence and made substantial progress toward disproving the infinitary version of the same statement over $\mathbb{Q}$.

In the 1970's, Hindman \cite{hindman} asked about ``the natural finite version of the main sums and products problem", i.e. the natural common generalization of the additive and multiplicative forms of Folkman's theorem.  He has repeated this conjecture on a number of occasions (see e.g. Question 17.18 of \cite{book} and Problem 6.5 of \cite{bropen}, where he states that he is ``absolutely certain that it is a fact").

\begin{conjecture}~\label{hindman} For any $n \ge 2$, if $\mathbb{N}$ is colored in finitely many colors, there exist $x_1, \cdots, x_n$ such that all the numbers $\sum_{i \in S}x_i$ and $\prod_{i \in S}x_i$, for nonempty $S \subset [n]$, are the same color.
\end{conjecture}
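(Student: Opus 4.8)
The plan is to obtain the integer statement by first proving its analogue over $\mathbb{Q}$ --- which is the main theorem of this paper --- and then transferring from $\mathbb{Q}$ down to $\mathbb{N}$. Concretely: given a finite coloring $\chi$ of $\mathbb{N}$, I would build a finite coloring $\chi'$ of $\mathbb{Q}_{>0}$ by writing each positive rational in lowest terms as $a/b$ and letting $\chi'(a/b)$ record $\chi(a)$ together with a bounded amount of auxiliary data --- the pair $(\chi(a),\chi(b))$, or local information at the primes dividing $ab$ --- chosen so that a $\chi'$-monochromatic sum--product configuration over $\mathbb{Q}$ can be read back as a $\chi$-monochromatic one over $\mathbb{N}$. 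Applying the rational theorem to $\chi'$ yields $x_1,\dots,x_n\in\mathbb{Q}_{>0}$ with every $\sum_{i\in S}x_i$ and every $\prod_{i\in S}x_i$ of one $\chi'$-color, and one would finish by clearing denominators, replacing each $x_i$ by $dx_i$ with $d$ the least common multiple of the denominators.

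The difficulty --- and the reason the integer conjecture is so much deeper than its rational cousin --- is exactly this last step. Under the dilation $x_i\mapsto dx_i$ the subset sums rescale uniformly, $\sum_{i\in S}(dx_i)=d\sum_{i\in S}x_i$, so a coloring invariant under multiplication by $d$ would keep their common color; but the subset products pick up the factor $\prod_{i\in S}(dx_i)=d^{|S|}\prod_{i\in S}x_i$, and the exponents $1,2,\dots,n$ are all distinct. No finite coloring of $\mathbb{N}$ can be simultaneously invariant under multiplication by $d,d^2,\dots,d^n$ as $d$ varies without collapsing to something that has forgotten $\chi$, so the naive transfer cannot work.

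There are several avenues I would try to get around this. First, force the rational theorem to output a configuration whose $x_i$ all share a denominator $d$ that is a perfect $n!$-th power; then every scaling factor $d^{|S|}$ is an $n!$-th power, it suffices to use a coloring invariant under multiplication by $n!$-th powers, and the room lost to that invariance might be recovered by composing with a multiplicative Folkman (multiplicative Hindman-type) absorption step inside each color class. Second, bypass $\mathbb{Q}$ and run the argument behind the rational theorem directly over $\mathbb{N}$, replacing any use it makes of the freedom to rescale by an arbitrary rational with a large finite family of integer parameters, controlled by Folkman's theorem additively and the Hales--Jewett theorem multiplicatively; this inflates the number of colors that must be tracked, but only finitely, which is all the conjecture asks. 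Third, in the ultrafilter framework, look for a configuration assembled from a finite chain of ultrafilters in $\beta\mathbb{N}$ --- one per subset size --- glued along the Folkman tree; Hindman's result that $p+p\ne p\cdot p$ forbids doing this with a single ultrafilter, but for a fixed finite $n$ a finite gluing is not obviously obstructed.

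In every version the crux is the same, and it is the step I expect to be the real obstacle: reconciling the additive constraint, which survives a uniform dilation, with the multiplicative constraint, whose subset products acquire the non-uniform factor $d^{|S|}$ --- equivalently, closing the gap between the free rescaling available in $\mathbb{Q}$ and the rigidity of $\mathbb{N}$. This is precisely the point at which the present paper stops short of the full integer conjecture.
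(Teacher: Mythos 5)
There is a genuine gap here --- indeed, the statement you were asked to prove is not proved in the paper at all: it is Hindman's conjecture over $\mathbb{N}$, which the paper explicitly leaves open (even the case $n=2$, the partition regularity of $\{x,y,x+y,xy\}$ over $\mathbb{N}$, is open). The paper proves only the rational analogue, and your proposal, to its credit, correctly identifies why the naive transfer from $\mathbb{Q}$ to $\mathbb{N}$ fails: clearing denominators by a dilation $x_i\mapsto dx_i$ preserves the additive pattern up to a uniform factor $d$ but multiplies the subset products by $d^{|S|}$ with $|S|$ ranging over $1,\dots,n$, and an arbitrary finite coloring of $\mathbb{N}$ carries no invariance under such non-uniform rescalings. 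But after naming the obstacle you do not overcome it; the three ``avenues'' are speculative and each one breaks at the same place. For the first, the rational argument gives no control over the denominators of the $x_i$ --- they accumulate through repeated rescalings by unknown integers $k$ produced by polynomial van der Waerden --- so you cannot force them to be $n!$-th powers, and you cannot pass to a coloring invariant under multiplication by all $n!$-th powers without discarding the original coloring. For the second, running the paper's argument directly over $\mathbb{N}$ founders exactly where the paper says it does: to control the color of $P(ac;b)$ one must control $P(a;\frac{b}{c})$, i.e.\ one needs divisibility $c\mid b$, and the paper's own example (color $3^xy$ red when $y\equiv 1 \bmod 3$ and blue when $y\equiv 2 \bmod 3$) shows no coloring-theoretic argument can force three consecutive multiples of $c$, hence such divisibility, into one color class. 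The third avenue is likewise only a hope; no finite chain of ultrafilters with the required compatibilities is constructed or known to exist.

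So the honest verdict is that your proposal is a correct diagnosis of the difficulty rather than a proof: it reproduces, in your own words, exactly the obstruction discussed in the paper's final section (and the reason the author works over $\mathbb{Q}$ throughout), and it stops where the paper stops. Nothing you wrote establishes the conjecture, and nothing in the paper does either; a genuinely new idea --- the paper suggests, for instance, understanding partition regularity of patterns like $Q(d)(x+P_i(d))$ with $Q(0)=P_i(0)=0$ and non-monic $Q$ --- would be needed to bridge the gap between the rational theorem and the integer conjecture.
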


Hindman \cite{npc} has also conjectured the weaker form of this, where we color $\mathbb{Q}$ instead of $\mathbb{N}$.


\begin{conjecture}~\label{qhindman} For any $n \ge 2$, if $\mathbb{Q}$ is colored in finitely many colors, there exist $x_1, \cdots, x_n$ such that all the numbers $\sum_{i \in S}x_i$ and $\prod_{i \in S}x_i$, for nonempty $S \subset [n]$, are the same color.
\end{conjecture}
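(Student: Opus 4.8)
The plan is to secure the multiplicative half of the pattern essentially for free and then work to secure the additive half, exploiting that $\mathbb{Q}^{>0}$ is far more flexible than the multiplicative semigroup of $\mathbb{N}$. First I would reduce to colorings of $\mathbb{Q}^{>0}$: all subset sums and subset products of positive rationals are positive rationals, so restricting the coloring costs nothing. Note that each half is partition regular in isolation --- $\{\sum_{i\in S}x_i\}$ by Folkman's theorem (inherited from $\mathbb{N}$), and $\{\prod_{i\in S}x_i\}$ by the multiplicative form of Folkman's theorem over $(\mathbb{Q}^{>0},\cdot)$, since this semigroup contains $(\mathbb{N},\cdot)$ and in fact $\mathbb{Q}^{>0}\cong\mathbb{Z}^{(\mathbb{N})}$ --- so the entire difficulty lies in realizing both simultaneously on one tuple $(x_1,\dots,x_n)$.

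The first real step is to pull multiplicative structure out of a single color class. Applying Hindman's theorem multiplicatively to $\mathbb{Q}^{>0}$ gives a color $c$ and a multiplicative IP set $FP(u_1,u_2,\dots)\subseteq\chi^{-1}(c)$; moreover one can take $u_i=b^{e_i}$ for a base $b$ of our own choosing together with an additive Folkman sequence $(e_i)$ for the pulled-back coloring $k\mapsto\chi(b^k)$, and one may further pass to a subsequence so that $(e_i)$ is super-increasing. I then look for the $x_i$ among products $\prod_{k\in T_i}u_k$ over pairwise disjoint finite index blocks $T_1,\dots,T_n$. Then for every nonempty $S\subseteq[n]$ the block $\bigcup_{i\in S}T_i$ is again a finite nonempty set of indices, so $\prod_{i\in S}x_i=\prod_{k\in\bigcup_{i\in S}T_i}u_k\in FP(u_1,u_2,\dots)$: \emph{every} subset product is automatically color $c$. (Dually, one could make the subset sums automatic by using disjoint blocks of an \emph{additive} IP set and then be forced to chase the products; the two setups are symmetric.)

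Everything now reduces to the hard part: choosing the disjoint blocks so that the $2^n-1$ subset sums $\sum_{i\in S}\prod_{k\in T_i}u_k$ are \emph{also} color $c$ --- equivalently, locating inside a monochromatic multiplicative IP set an additive Folkman configuration that is monochromatic in the \emph{prescribed} color $c$. Over $\mathbb{N}$ this is exactly Hindman's open Conjecture~\ref{hindman}, so the argument has to use $\mathbb{Q}$. With $u_i=b^{e_i}$ and $(e_i)$ super-increasing, each subset sum $\sum_{i\in S}b^{e_{k_i}}$ is a tightly controlled ``base-$b$'' expression (a digit $1$ at the widely spaced positions $e_{k_i}$ for $i\in S$), and these $2^n-1$ expressions depend on only finitely many parameters. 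I would then run a \emph{second}, additive Ramsey argument --- van der Waerden, Folkman, or Rado, or a further Hales--Jewett-type pigeonhole on the colors induced on the $2^n-1$ subset sums --- to pin down the base $b$ and a sparse sub-collection $e_{k_1}<\dots<e_{k_n}$ for which all these subset sums land back in $\chi^{-1}(c)$; here the freedom afforded by $\mathbb{Q}$ --- fractional and multiplicatively independent choices of base, together with the ability to rescale, invert, and use arbitrarily small denominators --- is what is meant to rescue a scheme that provably cannot work over $\mathbb{N}$. An alternative route is induction on $n$: to extend a size-$(n-1)$ configuration of color $c$ by one element $x_n$, one needs $x_n$, the additive shifts $x_n+\sigma_S$, and the multiplicative dilates $x_n\cdot\pi_S$ all to lie in $\chi^{-1}(c)$ at once (the $\sigma_S$ and $\pi_S$ being the old subset sums and products), i.e. $x_n$ in the intersection of $\chi^{-1}(c)$ with finitely many additive shifts and multiplicative dilates of itself; over $\mathbb{Q}$ one hopes the denominator slack keeps this intersection nonempty while still leaving enough structure to iterate.

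I expect essentially all of the work and all of the cleverness to sit in this last step: forcing one color class to support the required additive \emph{and} multiplicative structure on overlapping elements. That obstruction is precisely what keeps Conjecture~\ref{hindman} open over $\mathbb{N}$, and the whole argument hangs on finding the right $\mathbb{Q}$-specific device --- rescaling and inversion, the abundance of multiplicatively independent rationals, fractional bases, or the denominator slack in an inductive extension --- that makes the two structures coexist. The disjoint-block reduction of the first two steps is, by comparison, routine.
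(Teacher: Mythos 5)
Your plan secures the subset products for free (disjoint blocks of a multiplicative IP set, or equivalently powers $b^{e_i}$ with the exponents forming an additive Folkman sequence for the pulled-back coloring $k\mapsto\chi(b^k)$), and then defers everything to a ``second additive Ramsey argument'' that is supposed to force the $2^n-1$ subset sums into the \emph{prescribed} color $c$. That deferred step is not a step: it is the entire problem, and no mechanism you name can perform it. Van der Waerden, Folkman, Rado, or a Hales--Jewett pigeonhole applied to the induced coloring of the subset sums will only produce a monochromatic configuration in \emph{some} color $c'$; nothing ties $c'$ to the color $c$ that was fixed in advance by the multiplicative application of Hindman's theorem, and the structure ``$\chi^{-1}(c)$ contains a multiplicative IP set'' gives no leverage whatsoever over the additive behaviour of numbers like $\sum_{i\in S}b^{e_{k_i}}$, which are not themselves powers of $b$ and hence are invisible to the pulled-back coloring. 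Your appeal to ``the freedom of $\mathbb{Q}$ --- rescaling, inversion, fractional bases, denominator slack'' is an accurate description of where help must come from, but it is invoked as a hope rather than as an argument; the same is true of the inductive variant, where the nonemptiness of the intersection of $\chi^{-1}(c)$ with finitely many of its additive shifts and multiplicative dilates is exactly what needs to be proved.

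For contrast, the paper does not fix a color class and try to push both structures into it. Its main lemma builds, by induction, rationals $x_n,\dots,x_1$ with an \emph{absorption} property: the color of $x_i\prod_{j\in T}x_j$ is unchanged when one adds any bounded rational linear combination of the products $\prod_{j\in S}x_j$ with $S$ lying above $T$. This is achieved by applying the multidimensional polynomial van der Waerden theorem to auxiliary ``product colorings'' (which record the colors of all bounded dilates $Kx$), and the $\mathbb{Q}$-specific device is concrete: one shifts $x_n$ by combinations of quotients such as $\prod_{S}x_j/\prod_{T}x_j$ and then rescales each $x_j$ by a power $k^{y_j}$ of the van der Waerden common difference, chosen via polynomials $n^{y_j}$, so that the shifts become exactly the products one needs; division by $c$ (impossible over $\mathbb{N}$) is what makes the shift of $ac$ by $b$ realizable. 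Colors are then aligned across degrees by Ramsey's theorem (so the color of a monomial depends only on its degree) followed by Folkman's theorem applied to the \emph{degrees}, and the final configuration takes each $X_i$ to be a monomial $x_{n-i}\prod_{j\in S_i}x_j$ of controlled degree: subset products are monochromatic because their degrees are subset sums of the Folkman degrees, and subset sums are monochromatic by the absorption lemma. That two-stage transfer --- absorption of additive shifts into monomial colors, then Folkman on degrees --- is the missing engine your proposal would need.
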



~\Cref{hindman} is one of the most important and longstanding conjectures in partition regularity, and very little is known about it.  In particular, what Hindman \cite{hindman} calls ``the simplest special case" of $n=2$ (i.e. the partition regularity of $\{x,y,x+y,xy\}$ over $\mathbb{N}$\footnote{More precisely, Hindman refers to the $2$-color version of $n=2$ as the ``simplest special case".}), has been highlighted several times since Hindman made some numerical computations for it in his original paper, settling it when $2$ colors are used and giving a lower bound when $3$ colors are used.  It is still open (see e.g. \cite{green}, \cite{hindman2}).  Moreira \cite{moreira} made substantial progress on this problem, showing that $\{x,x+y,xy\}$ is partition regular over $\mathbb{N}$. Over $\mathbb{Q}$, the special case $n=2$ was settled recently in an exciting work of Bowen and Sabok \cite{bs}, who proved that $\{x,y,x+y,xy\}$ is partition regular over $\mathbb{Q}$.  Before \cite{bs}, there had also been previous work on the partition regularity of $\{x,y,x+y, xy\}$ over fields (\cite{cil}, \cite{greensand}, \cite{shk}). By a standard compactness argument (see \cite{bs}), the partition regularity of a polynomial pattern over arbitrary fields follows from its partition regularity over $\mathbb{Q}$, and so \cite{bs} subsumes all of this previous work. For $n>2$, however,~\Cref{qhindman} was open. Over both the setting of fields like $\mathbb{Q}$ and the setting of $\mathbb{N}$, the existing results were all very specialized to the case $n=2$, and there were no nontrivial results about what happens in the general case.

In this paper, we completely settle~\Cref{qhindman}.

\begin{theorem} For any $n \ge 2$, if $\mathbb{Q}$ is colored in finitely many colors, we can find some $x_1, \cdots, x_n \neq 0$ such that all the numbers $\sum_{i \in S}x_i$ and $\prod_{i \in S}x_i$, for nonempty $S \subset [n]$ are the same color.
\end{theorem}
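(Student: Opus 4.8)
The plan is to work inside the positive rationals $\mathbb{Q}_{>0}$ (restricting the given finite colouring $c$ there changes nothing essential) and to view the target configuration in the $n$ unknowns $x_1,\dots,x_n$ as the superposition of two sub-patterns: the \emph{additive} part, the $2^n-1$ subset sums $\sum_{i\in S}x_i$, and the \emph{multiplicative} part, the $2^n-1$ subset products $\prod_{i\in S}x_i$. Taken in isolation the additive part can be made monochromatic by Folkman's theorem, and the multiplicative part by its multiplicative analogue (both recalled in the introduction); the content of the theorem is to achieve both simultaneously, with the same $x_i$ and in the same colour. Since the infinite version is false over $\mathbb{N}$ --- there is no idempotent ultrafilter with $p+p=p\cdot p$ --- any proof must use the finiteness of $n$ in an essential way, and, to get the statement over $\mathbb{Q}$ rather than $\mathbb{N}$, the extra room $\mathbb{Q}$ provides: division, and the fact that $(\mathbb{Q}_{>0},\cdot)$ is free abelian on the primes, giving infinitely many independent ``multiplicative directions''.

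The first reduction I would make is to look for the $x_i$ in a \emph{templated} form, $x_i=\lambda\,m_i$, where $m_1,\dots,m_n$ and $\lambda\in\mathbb{Q}_{>0}$ are all to be chosen, with the $m_i$ placed in distinct prime directions and spread out (e.g.\ super-increasing, or $m_i$ a long product-block from a multiplicative IP-set) so that the subset sums $\sigma_S=\sum_{i\in S}m_i$ are distinct and ``generic''. Under this ansatz the subset sums become $\lambda\sigma_S$ --- a single \emph{uniform} dilation of the fixed finite set $\{\sigma_S\}$ --- while the subset products become $\lambda^{|S|}\prod_{i\in S}m_i$ --- dilations by the \emph{non-uniform} amounts $\lambda^{1},\lambda^{2},\dots,\lambda^{n}$. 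This exposes the real enemy: no single dilation $\lambda$ turns both halves of the pattern into dilated copies of fixed monochromatic configurations, so one cannot first solve the additive half by Folkman and then fix the multiplicative half by rescaling (or vice versa).

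To fight this I would run the additive and multiplicative arguments \emph{interleaved}, organised as an induction on $n$ with a substantially strengthened hypothesis. Proving the bare statement by induction on $n$ fails: passing from $n$ to $n+1$ already requires producing a $z$ with $z$, every $z+\sigma$, and every $z\cdot\pi$ (as $\sigma,\pi$ range over the subset sums and products of the length-$n$ configuration already built) all lying in a \emph{prescribed} colour, which is far too strong to expect of an arbitrary colour class. So at stage $n$ I would carry along not a single solution but a whole structured family: a colour $\chi$ together with an additive Deuber-type $(m,p,c)$-set (which uniformly witnesses all partition-regular linear data, hence all the subset-sum constraints) sitting inside a multiplicative IP-set, all coloured $\chi$, with enough quantitative slack --- in how large the generators are and how many prime directions remain unused --- that a new coordinate $x_{n+1}$ can be inserted without leaving $\chi$. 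The inductive step then becomes: given such a family, find $x_{n+1}$ in the multiplicative IP-set in a fresh prime direction, whose additive translates $x_{n+1}+\sigma$ land in the additive $(m,p,c)$-set and whose multiplicative dilates $x_{n+1}\cdot\pi$ stay in the multiplicative IP-set; the two-fold freedom in choosing $x_{n+1}$ --- additively via the $(m,p,c)$ structure, multiplicatively via an unused prime --- is exactly what makes both demands satisfiable at once.

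The main obstacle, then, is not any one step but the bookkeeping that keeps the additive and multiplicative structures compatible through the induction: one must maintain, simultaneously and in a single colour, an additive object rich enough to absorb all future subset sums and a multiplicative object rich enough to absorb all future subset products, while the $\lambda^{|S|}$-type non-uniform scaling keeps trying to pull them apart. Getting the strengthened inductive hypothesis exactly right --- strong enough to push through, yet still true --- is the crux, and it is precisely there that the finiteness of $n$ (only finitely many ``levels'' $\lambda^1,\dots,\lambda^n$ to control) and the inexhaustible supply of prime directions in $\mathbb{Q}_{>0}$ must be spent.
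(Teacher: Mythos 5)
There is a genuine gap: what you have written is a strategy outline rather than a proof, and the missing piece is exactly the heart of the matter. Your induction rests on a ``substantially strengthened hypothesis'' --- a colour $\chi$ containing an additive Deuber-type structure sitting inside a multiplicative IP-set, compatible enough that a new coordinate can be inserted --- but you never formulate this hypothesis precisely, never prove the base case, and never carry out the inductive step; you yourself note that ``getting the strengthened inductive hypothesis exactly right'' is the crux. Concretely, the step as described does not work: an $(m,p,c)$-set of colour $\chi$ does not allow you to translate by the \emph{specific, already chosen} subset sums $\sigma$ of $x_1,\dots,x_n$ and stay in $\chi$; for that, the previously built configuration would have to be embedded in the additive structure in a very particular way, and simultaneously the dilates $x_{n+1}\cdot\pi$ by the already chosen subset products would have to respect the multiplicative structure. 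That additive--multiplicative compatibility is precisely what the theorem asserts and is the point where all the difficulty lives; the sentence ``the two-fold freedom \dots is exactly what makes both demands satisfiable at once'' asserts it without argument. Mere coexistence of an additive $(m,p,c)$-set and a multiplicative IP-set in one colour class (which central-sets arguments do give) is far weaker than the joint structure your step needs, and nothing in the proposal bridges that gap. You also never address the division/rescaling obstruction: to keep a product such as $x_{n+1}\pi$ under control while shifting $x_{n+1}$ additively, one is forced to shift by quantities like $\sigma/\pi$ and to rescale the earlier variables, and no mechanism for this appears in your plan.

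For comparison, the paper resolves the interaction by a different mechanism. Proposition~\ref{prop:main}, proved by induction using the multidimensional polynomial van der Waerden theorem (Theorem~\ref{thm:mpvdw}) applied to auxiliary product colourings, arranges that any monomial $x_i\prod_{j\in T}x_j$ keeps its colour when shifted by bounded linear combinations of the products $\prod_{j\in S}x_j$ with $S$ ``above'' $T$; the polynomial (rather than linear) van der Waerden theorem is what permits the simultaneous shifting and rescaling (replacing $x_j$ by $x_jk^{y_j}$) that your plan has no substitute for. Then Ramsey's theorem makes the colour of a monomial depend only on its degree, Folkman's theorem is applied to the \emph{degrees}, and the final solution is taken to be $X_i=x_{n-i}\prod_{j\in S_i}x_j$: subset products of the $X_i$ are single monomials handled by the degree argument, while subset sums are a monomial plus lower-order terms handled by Proposition~\ref{prop:main}. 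None of this structure --- control of colours of shifted monomials, degree-uniformity via Ramsey, Folkman at the level of degrees --- is present in your proposal, and without it (or a genuinely worked-out alternative) the argument does not go through.
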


In addition to proving a much more general result, our proof has several key differences from that of Bowen and Sabok.  One important difference is that our proof does not use the result of Moreira as a black box.  Instead, it is similar in spirit to the author's proof of Moreira's result \cite{alweiss}.  Furthermore, in contrast to their proof, and like \cite{alweiss}, our proof obtains explicit bounds.  Additionally, like in \cite{alweiss} and unlike the works of Moreira and Bowen-Sabok, we use the polynomial van der Waerden theorem.  We believe that something like this is necessary to resolve~\Cref{hindman}, as explained in Conjecture $4.1$ of \cite{alweiss}.

\section{Preliminaries}

Throughout this paper, the notation $M \gg M'$ means that $M$ is a sufficiently large function of $M'$.  This notation will be useful for us, because we will often need a large ``reservoir" to ensure that we can apply standard Ramsey theoretic results. For convenience, we define the following notations:

\begin{definition}
The size $s(x)$ of a rational number $x=\frac{a}{b}$ with $\gcd(a,b)=1$ is $\max(|a|, |b|)$.
\end{definition}

\begin{definition} A \emph{good polynomial} $P(x_0; x_1, \cdots, x_n)$ is a rational linear combination of $x_0,\ldots,x_n$ with nonzero $x_0$ coefficient. 

The \emph{size} $s(P)$ of a good polynomial $P(x_0; x_1, \cdots, x_n)=c_0(x_0+\sum_{i=1}^{n}\frac{c_i}{c_0}x_i)$ is the maximum of $s(c_0), s(c_1/c_0), \cdots, s(c_n/c_0)$.  If $c_0=1$, we say that $P$ is \emph{monic}.

For a good polynomial $P$, we also define the notation $P(x_0):=P(x_0, 0, \cdots, 0)=c_0x_0$, and in general $P(x_0; x_1, \cdots, x_j):=P(x_0; x_1, \cdots, x_j, 0, \cdots, 0)=\sum_{i=0}^{j}c_ix_i$.
\end{definition}

The size condition is a technical condition to make our proof finitary.  It might be more intuitive a priori to define the size of a polynomial to be the maximum of the $s(c_i)$.  For technical reasons that will become apparent later, it turns out that this definition is much cleaner.  In any case, the definition of size $\max(c_0, \frac{c_i}{c_0})$ that we use and the naive size $\max(c_i)$ are readily seen to be equivalent up to a quadratic, i.e. they are at most quadratic functions of each other.

In order to prevent division by $0$, we could work over $\mathbb{Q}^{+}$.  In that setting, the same proof we present here would go through, with minor adjustments to avoid sign issues.  In any case, partition regularity over $\mathbb{Q} \setminus \{0\}$ for any of the patterns we are concerned with here is readily seen to be equivalent to partition regularity over $\mathbb{Q}^{+}$.  Indeed it is impossible for two numbers $x_1$ and $x_2$ and their product $x_1x_2$ to all be negative, so if all of the negative numbers are their own color, then we will not have a monochromatic pattern in this color.  In the exact same way, partition regularity over $\mathbb{Z}$ for these patterns is equivalent to partition regularity over $\mathbb{N}$.  For the sake of elegance and to be consistent with notations in the existing literature, we choose to present the proof over $\mathbb{Q}$, glossing over this minor technical detail and implicitly assuming variables are nonzero when necessary; there will always be more than enough freedom to pick them so that this is the case.

One key ingredient in our proof is the \emph{polynomial van der Waerden theorem}, originally due to Bergelson and Leibman \cite{bl2} (see also Walters \cite{walters} for a combinatorial proof).  

\begin{theorem}[Polynomial van der Waerden] Given a finite coloring of $\mathbb{Z}^{\ell}$, and some integer valued polynomials $p_1, \cdots, p_k$ with $p_i(0)=0$, for any vectors $v_1, \cdots, v_k$ there exist $n \in \mathbb{Z}$ and $u \in \mathbb{Z}^{\ell}$ so that $u+p_i(n)v_i$ for $i \le k$ are all the same color.  
\end{theorem}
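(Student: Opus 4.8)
This is the polynomial van der Waerden theorem of Bergelson and Leibman, and I would sketch the self-contained combinatorial route — color focusing together with Bergelson's PET induction (the ``polynomial exhaustion technique''), as in Walters' combinatorial proof — since it is in the spirit of the rest of this paper and yields an explicit bound for $n$ and $u$; the original ergodic derivation from a multidimensional polynomial recurrence theorem is an alternative I would not pursue. A compactness argument first reduces the theorem to the finitary statement that for every number of colors $r$ there is $N = N(r; p_1, \ldots, p_k; v_1, \ldots, v_k)$ such that every $r$-coloring of the box $[-N, N]^{\ell}$ contains $n \in \mathbb{Z}$ and $u$ with all of $u + p_1(n)v_1, \ldots, u + p_k(n)v_k$ (and, at no extra cost, $u$ itself) lying in the box and the same color. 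It is cleanest to repackage the data: set $G := \mathbb{Z}^{\ell}$ and $P_i(n) := p_i(n)\,v_i$, so each $P_i$ is a $G$-valued polynomial in the single variable $n$ with $P_i(0) = 0$, and the goal becomes: every finite coloring of $G$ contains a monochromatic polynomial progression $\{u + P_i(n) : 0 \le i \le k\}$ with $n$ drawn from a prescribed long interval. Framing things in the abelian group $G$ handles the multidimensionality uniformly.

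\textbf{PET induction.} To a finite family $\mathcal{P} = \{P_1, \ldots, P_k\}$ of $G$-valued polynomials in $n$, attach Bergelson's \emph{weight}: declare $P \sim Q$ when $\deg(P - Q) < \deg P = \deg Q$, and let $w(\mathcal{P})$ record, for each degree $d \ge 1$, the number of $\sim$-classes among the members of $\mathcal{P}$ of degree exactly $d$, these tuples compared so that a reduction at a higher degree outweighs any increase at lower degrees; the possible values form a well-order (of type at most $\omega^{\omega}$). The induction runs on $w(\mathcal{P})$. When every $P_i$ is linear, a monochromatic polynomial progression is just a monochromatic homothetic copy of the fixed finite subset $\{0, P_1(1), \ldots, P_k(1)\} \subset G$, i.e.\ the content of the multidimensional van der Waerden (Gallai) theorem, which follows from one-dimensional van der Waerden by a focusing argument; this is the base case. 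The inductive step hinges on a \emph{differencing} move $P_i(n) \mapsto P_i(n + m) - P_{i_0}(m)$, where $P_{i_0} \in \mathcal{P}$ has top degree and $m$ is a fresh parameter: the polynomials of top degree either drop in degree or coalesce into strictly fewer $\sim$-classes, so the differenced family has strictly smaller weight, while subtracting $P_{i_0}(m)$ rather than $P_{i_0}(n)$ keeps the normalization $P_i(0) = 0$ intact.

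\textbf{Color focusing.} Fix an $r$-coloring of $G$ and assume the theorem for all families of smaller weight. Run a van der Waerden-style focusing argument in which the inductive hypothesis, applied to the lower-weight differenced family, plays the role of van der Waerden's shorter inner configuration: inside a large sub-box, locate by the inductive hypothesis a monochromatic copy of the differenced configuration; reading its parameters back through $P_i(n + m) = \bigl(P_i(n + m) - P_{i_0}(m)\bigr) + P_{i_0}(m)$ exhibits, for one new increment, many translated tail-pieces of the target configuration focused at a single apex, which one arranges to lie in the sub-box used at the next stage. Carrying this out through $r + 1$ successive stages forces two of the apexes to receive the same color, and the chain of focused copies strung between them telescopes --- using $P_i(0) = 0$ to fill in the omitted terms --- into a genuine monochromatic $\{u + P_i(n) : i \le k\}$ whose increment $n$ is the sum of the increments harvested at the intermediate stages. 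Tracking how the required box sizes propagate up the well-founded induction produces the (tower-type, but explicit) bound $N(r; \ldots)$.

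\textbf{Main obstacle.} Everything above is van der Waerden focusing wrapped around a well-founded induction, and the delicate part is the bookkeeping that makes the wrapping coherent: one must pin down $w(\mathcal{P})$ and the differencing move so that, simultaneously, (a) one differencing step provably lowers $w$ whatever degrees and $\sim$-classes are present; (b) the normalization $P_i(0) = 0$ survives differencing, which is why the subtracted term is $P_{i_0}(m)$ and not $P_{i_0}(n)$; and (c) a monochromatic solution of the differenced family, together with the focusing chain, genuinely reassembles into a monochromatic solution of $\mathcal{P}$ --- in particular, each stage's harvested increment must be a quantity that can legitimately be added to the next stage's increment to yield a valid value of $n$. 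Securing (a)--(c) at once is the technical core of the argument.
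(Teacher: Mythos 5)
The paper does not prove this statement at all: it is quoted as a known background result, attributed to Bergelson--Leibman \cite{bl2} with a pointer to Walters \cite{walters} for a combinatorial proof. Your sketch is exactly that cited combinatorial route (compactness to a finitary statement, PET induction on Bergelson's weight, Gallai/multidimensional van der Waerden as the base case, and a van der Waerden--style color-focusing step powered by the inductive hypothesis), so in spirit you are reproducing the proof the paper points to rather than diverging from it.

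That said, what you have written is a roadmap rather than a proof, and the part you defer under ``Main obstacle'' is precisely the load-bearing part, and one detail is off as literally stated. You claim that subtracting $P_{i_0}(m)$ rather than $P_{i_0}(n)$ ``keeps the normalization $P_i(0)=0$ intact,'' but the differenced expression $P_i(n+m)-P_{i_0}(m)$ does not vanish at $n=0$ as a polynomial in $n$ (it equals $P_i(m)-P_{i_0}(m)$), nor at $m=0$ as a polynomial in $m$ (it equals $P_i(n)$); it vanishes only at the origin of the pair $(n,m)$. This is why Walters (and Bergelson--Leibman) formulate the induction for families of polynomials in several variables, with the weight defined for such multivariable families --- the differencing move adds a variable, and the weight decrease must be verified in that setting. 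Relatedly, whether the polynomial you difference by should be of top degree (as you say) or of minimal degree depends on exactly how the derived family is formed (subtracting at the old variable versus the new one), and the naive single-variable reading of your move does not obviously lower the weight: for instance, for the single polynomial $n^2$ the family $\{m^2+2tm : t\}$ arising in the focusing has the same weight vector as $\{n^2\}$, so the induction needs a more careful formulation (this is handled in \cite{walters}). Since the paper itself takes the theorem as a black box, citing \cite{bl2} and \cite{walters} is the appropriate resolution; if you want a self-contained proof, the multivariable formulation and the weight bookkeeping are exactly what you would need to supply.
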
	

The \emph{multidimensional polynomial van der Waerden theorem} (which follows for instance from \cite{bl2}, Theorem $B$) also guarantees that $u+P(p_1(n)v_1, \cdots ,p_k(n)v_n)$ are the same color, for any linear combination $P$ of $p_1(n)v_1, \cdots, p_k(n)v_k$ with $s(P) \le M$.  An easy scaling argument shows the same conclusion holds in $\mathbb{Q}^{\ell}$ or $\mathbb{Q^{+}}^{\ell}$.

\begin{theorem}[Multidimensional Polynomial van der Waerden]\label{thm:mpvdw}
	Given a finite coloring of $\mathbb{Q}^{\ell}$, and some integer valued polynomials $p_1, \cdots, p_k$ with $p_i(0)=0$, for any vectors $v_1, \cdots, v_k$ there exist $n \in \mathbb{Z}$ and $u \in \mathbb{Q}^{\ell}$ so that $u+\sum c_ip_i(n)v_i$ for $i \le k$ are all the same color, as the $c_i$'s range over all rational numbers with size at most $M$.
\end{theorem}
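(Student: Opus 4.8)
The plan is to deduce \Cref{thm:mpvdw} from the integer multidimensional polynomial van der Waerden theorem (the consequence of \cite{bl2}, Theorem~$B$, recalled just above) by two rounds of clearing denominators: one to make the admissible coefficients $c_i$ integral, and one to make the vectors $v_i$ integral. The underlying principle is that a monochromatic polynomial configuration in a scaled copy of the integer lattice transfers back verbatim to $\mathbb{Q}^\ell$, since the pattern is homogeneous in the ``outer'' scaling.

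In detail, I would first record that the set $T$ of tuples $(c_1,\dots,c_k)$ with each $c_i$ a rational having numerator and denominator at most $M$ is \emph{finite}. Choose a positive integer $D$ divisible by each of $1,2,\dots,M$ (e.g.\ $D=M!$); then $Dc_i\in\mathbb{Z}$ and $|Dc_i|\le DM$ for every $(c_1,\dots,c_k)\in T$. Choose also a positive integer $E$ with $Ev_i\in\mathbb{Z}^\ell$ for all $i$ (a common denominator of the coordinates of the $v_i$), put $v_i':=Ev_i\in\mathbb{Z}^\ell$, and set $M':=DM$. Given the finite coloring $\chi$ of $\mathbb{Q}^\ell$, define a finite coloring $\chi'$ of $\mathbb{Z}^\ell$ by $\chi'(x):=\chi\!\bigl(x/(DE)\bigr)$, and apply the integer multidimensional polynomial van der Waerden theorem to $\chi'$, the polynomials $p_1,\dots,p_k$, the integer vectors $v_1',\dots,v_k'$, and the parameter $M'$. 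This produces $n\in\mathbb{Z}$ (which, as usual, we may take nonzero) and $u'\in\mathbb{Z}^\ell$ such that $u'+\sum_i d_i\,p_i(n)\,v_i'$ has a single $\chi'$-color for all integers $d_i$ with $|d_i|\le M'$; in particular this holds with $d_i=Dc_i$ as $(c_1,\dots,c_k)$ ranges over $T$.

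Finally, set $u:=u'/(DE)\in\mathbb{Q}^\ell$. For any $(c_1,\dots,c_k)\in T$ we have $DE\bigl(u+\sum_i c_i p_i(n) v_i\bigr)=u'+\sum_i (Dc_i)\,p_i(n)\,v_i'\in\mathbb{Z}^\ell$, so the $\chi$-color of $u+\sum_i c_i p_i(n) v_i$ equals the $\chi'$-color of $u'+\sum_i (Dc_i)\,p_i(n)\,v_i'$, which does not depend on $(c_1,\dots,c_k)$; this is exactly the conclusion of \Cref{thm:mpvdw}. The version over $\mathbb{Q}^+$ (or $(\mathbb{Q}^+)^\ell$) is obtained in the same way, restricting to positive $c_i$ and noting $D,E>0$. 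There is no genuine obstacle beyond bookkeeping: the only points needing a moment's care are that $p_i(n)\in\mathbb{Z}$ because the $p_i$ are integer-valued at integer arguments, so the displayed vectors really lie in $\mathbb{Z}^\ell$ where $\chi'$ is defined, and that the integer theorem delivers one pair $(n,u')$ working \emph{uniformly} over the whole finite family $T$ — which is precisely why the multidimensional (Theorem~$B$) form, rather than the single-configuration polynomial van der Waerden theorem, must be invoked.
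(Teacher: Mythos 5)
Your proposal is correct and is essentially the paper's own argument: the paper likewise derives Theorem~\ref{thm:mpvdw} from the integer multidimensional polynomial van der Waerden theorem (Bergelson--Leibman, Theorem~$B$) via "an easy scaling argument," which your clearing of denominators with $D=M!$ and a common denominator $E$ for the $v_i$ makes explicit. The only difference is that you spell out the bookkeeping the paper leaves implicit, and your restriction to the integer multiples $d_i=Dc_i$ handles it cleanly.
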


We will use this result extensively in this paper, but we will only ever need the case where $p_i$ are monomials.  In this theorem, by compactness we can always take $u$ and $n$ to be bounded by a function of the number of colors in the initial coloring, the $p_i$, and value of $M$. In particular, they do not depend on the choice of the $v_i$.  

In our proof, we will often be applying the theorem on product colorings up to a certain size.  Given some coloring $\chi: \Q \to [n]$, we will use $\chi^M$ to denote \emph{the product coloring of $\chi$ of size $M$}, the coloring $\chi^M: \Q \to [n]^{|\{m: s(m) \le M\}|}$, where $\chi^M(q)=(\chi(mq))_{s(m) \le M}$.

\section{The $n=3$ case of the main lemma}

Before stating and proving our main lemma, we will prove the $n=3$ case, which captures all of the main ideas.  

\begin{proposition}

\label{prop:three}
For any finite coloring $\chi$ of $\mathbb{Q}$ and any size bound $M$, there exist $a,b,c \in \mathbb{Q}$ so that for any good $P$ with $s(P) \le M$:

\begin{enumerate}
\item The color of $P(a;b,c,bc)$ is the same as that of $P(a)$.
\item The color of $P(ac;b)$ is the same as that of $P(ac)$.
\item The color of $P(b;c)$ is the same as that of $P(b)$.
\end{enumerate}

\end{proposition}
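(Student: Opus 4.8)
The plan is to build the three points $a,b,c$ by a sequence of applications of the multidimensional polynomial van der Waerden theorem (Theorem~\ref{thm:mpvdw}), peeling off one variable at a time from the ``innermost" to the ``outermost", so that each new variable is chosen inside a homogeneous progression that the previous choices already control. Concretely, I would work from the bottom up. First fix $c$ by applying Theorem~\ref{thm:mpvdw} to the coloring of $\mathbb{Q}$: I want a $c$ of the form $c = u + (\text{stuff})$ so that all the linear expressions in $c$ that will later appear — namely those coming from $P(b;c)$ for the eventual $b$, and from $P(a;b,c,bc)$ — are the same color as the corresponding expressions with $c$ replaced by $0$. The point is that the polynomial vdW theorem lets me choose a \emph{homogeneous} step $p(n)$ (using a nonconstant polynomial with $p(0)=0$), and then $c$ and $0$ (the ``$u$" and ``$u$ plus nothing") lie in the same color class along with everything in between, uniformly over all coefficient choices bounded by a suitably inflated size bound $M' \gg M$.

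The subtlety, and what forces the use of the \emph{polynomial} rather than the linear vdW theorem, is that $bc$ is quadratic: once I have chosen $c$, the term $bc$ is $c$ times $b$, so when I later vary $b$ along a progression $b = u' + q(m) v'$, the term $bc$ varies along $c \cdot q(m) v'$, which is again a homogeneous polynomial progression in $m$ provided $q(0)=0$. So the order of operations should be: (i) choose the coloring-relevant data for $c$ using vdW on $\mathbb{Q}$, recording which color class everything lands in; (ii) then choose $b$ by applying Theorem~\ref{thm:mpvdw} again, this time to a product coloring that tracks simultaneously the expressions $P(b;c)$ (controlling point 3) and the expressions $P(ac;b)$ and $P(a;b,c,bc)$ that still involve the not-yet-chosen $a$ — since $c$ is now a fixed rational, all of $b$, $bc$, and the terms feeding into points 1 and 2 move along honest polynomial progressions in the single parameter of the $b$-step; (iii) finally choose $a$ by one more application, to a coloring that tracks $P(a;b,c,bc)$ (point 1) and $P(ac;b)$ (point 2) — here $b,c,bc$ are fixed rationals, so these are linear (indeed polynomial, after homogenizing the step) in the $a$-parameter, and the standard (polynomial) vdW theorem applies directly.

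The reason this nesting actually closes up is that ``$P(a;b,c,bc)$ has the same color as $P(a)=c_0 a$" is a statement purely about the $a$-variable once $b,c,bc$ are frozen: writing $P(a;b,c,bc) = c_0 a + (c_1 b + c_2 c + c_3 bc)$, the quantity in parentheses is a fixed rational offset, so I want $a$ chosen so that $c_0 a$ and $c_0 a + (\text{that offset})$ are the same color for every admissible $(c_1,c_2,c_3)$ — this is exactly a finite family of shifted points in a progression, handled by vdW with the reservoir $M' \gg M$. Similarly points 2 and 3 reduce to monochromaticity of finitely many affine shifts of $c_0$ times a single free variable. The one genuinely delicate bookkeeping point — and the step I expect to be the main obstacle — is making the quantifier order consistent: point 1 constrains $a$ given $b,c$; point 2 constrains the product $ac$ (hence effectively $a$, after dividing by the fixed $c$) given $b$; and point 3 constrains $c$ given $b$ — there is an apparent circularity because point 3 involves $c$ and $b$ together while $b$ is chosen after $c$. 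I would resolve this by noting that point 3 only asks that $P(b;c)$ and $P(b)$ agree, which is a statement about $b$ with $c$ as a \emph{fixed coefficient}, so it belongs to step (ii), not step (i); thus in step (i) I only need to prepare $c$ so that the \emph{later} expressions involving $c$ (those in points 1 and 2, and the $c$-dependence of the offsets in point 3) remain controllable, which amounts to choosing $c$ in a long enough homogeneous progression and inflating the size bound appropriately at each stage ($M \ll M_1 \ll M_2 \ll M_3$), a routine but careful induction on the number of variables that is exactly what the general main lemma will formalize.
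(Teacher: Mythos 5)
Your proposal breaks down at the step you describe as routine: choosing $a$ last, with $b,c$ (and $bc$) frozen, ``handled by vdW with the reservoir $M' \gg M$''. What Theorem~\ref{thm:mpvdw} gives you is monochromaticity of $u+\sum_i c_i p_i(n) v_i$ for an \emph{uncontrolled} common difference $n$: taking the steps to be $\frac{b}{c}, b, c, bc$, you obtain that the numbers $a+\ell_1 p_1(n)\frac{b}{c}+\ell_2 p_2(n) b+\ell_3 p_3(n) c+\ell_4 p_4(n) bc$ are monochromatic, not $a+\ell_1\frac{b}{c}+\ell_2 b+\ell_3 c+\ell_4 bc$. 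A large reservoir does not absorb the factors $p_i(n)$, because the guaranteed $n$ is not small compared to the reservoir (in any finitary version the bound on $n$ grows with the coefficient range, so the inequality you would need is circular). The paper removes these factors by \emph{rescaling} $b\mapsto k^2 b$, $c\mapsto kc$ after the application, which is why the polynomials are chosen as $(n,n^2,n,n^3)$ (respectively $(n^2,n^3,n^2,n^4)$ at the next stage) to match the weights of $\frac{b}{c}, b, c, bc$ (respectively their $c$-multiples), and why $b,c$ must remain adjustable at the time $a$ is chosen: the earlier condition 3 survives the rescaling only because it was established with a much larger size bound $M_0\gg M_1\gg M_2\gg M$. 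By fixing $b,c$ before touching $a$, you have discarded exactly the degree of freedom the argument needs.

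Two further gaps. First, your step (ii) asks the auxiliary coloring at the $b$-stage to ``track'' $P(ac;b)$ and $P(a;b,c,bc)$, but $a$ has not been chosen yet, so no such coloring exists; in the paper nothing about $a$ is anticipated when $b,c$ are found (they are produced together, as anchor and common difference of a single vdW application for condition 3) beyond pure size-bound slack. Second, getting conditions 1 and 2 from the same $a$ is not automatic: condition 2 concerns the dilates $Kc\,a$, so the shift of $a$ it requires is by multiples of $\frac{b}{c}$, and one must both strengthen condition 1 to include the step $\frac{b}{c}$ (and condition 2 to include $bc, c^2, bc^2$) so that the later shift and rescaling do not destroy it, and bake the fixed factor $c$ into the auxiliary coloring (the paper's $\mathfrak{X}_2(c(\cdot))$) rather than rely on the product coloring's parameter exceeding $M\cdot s(c)$, which would again be circular since $s(c)$ is not bounded in terms of the given data. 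None of this bookkeeping appears in your sketch, and it is where the actual content of the proof lies.
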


The reader may think of $a \gg b \gg c$ and in fact our proof can easily be modified to guarantee that.  Before presenting the proof of this proposition, we will say a little bit about its structure.  The proof takes the form of an algorithm that finds this pattern, albeit an extremely slow one.  It repeatedly uses the polynomial van der Waerden theorem to find and modify (``update") the variables $a,b,c$ so that the above conditions hold.

We will start by finding $b=b(0)$ and $c=c(0)$ so that the third condition, or third ``bullet point" holds.  The values of $a,b,c$ will change throughout the course of the proof as a function of time $t$, so this $b(0)$, $c(0)$ will not be our final $b,c$.  In particular, we will often ``shift" a variable by adding or subtracting some polynomials of the ``smaller" variables appearing later in the alphabet.  For instance, think about an operation like $b(1)=b(0)+3c(0)$ or $c(1)=5c(0)$.  We call these operations the \emph{shift} $b \leftarrow b+3c$ and the \emph{scaling} $c \leftarrow 5c$.  This notation $x \leftarrow y$ is used in ``pseudocode", which appears in many papers in theoretical computer science. Again, it is helpful to think of our process as an algorithm, and the variables $a,b,c$ evolving in time, with $a(t), b(t), c(t)$ being their values at time $t$.  We will not always explicitly write out the time $t$, instead introducing notation to describe the values of $a(t+1), b(t+1), c(t+1)$ in terms of $a(t), b(t), c(t)$.

\begin{definition}
	Given some choice of variables $(a,b,c)=(a(t),b(t),c(t))$, if we define the variables $a(t+1)=f(a(t),b(t),c(t))$, $b(t+1)=g(a(t),b(t),c(t))$, $c(t+1)=h(a(t),b(t),c(t))$, we refer to these operations as the ``updates" $a \leftarrow f(a,b,c)$, $b \leftarrow g(a,b,c)$, $c \leftarrow h(a,b,c)$. In general, we say $p(a,b,c) \leftarrow q(a,b,c)$ if $p(a(t+1),b(t+1),c(t+1))=q(a(t),b(t),c(t))$.  \end{definition}

For us, $f,g,h,p,q$ will always be polynomials.  In fact, our updates will be of a specific kind.  We rigorously define the aforementioned ``shifting" and ``scaling", two special kinds of operations, which will be the only kind of updates we ever make.  A \emph{shift} is an operation of the form $a \leftarrow a+f(b,c)$ or $b \leftarrow b+f(c)$, where we define $a(t+1)=a(t)+f(b(t),c(t))$ or $b(t+1)=b(t)+f(c(t))$ for some function $f$.  A \emph{scaling} is an operation of the form $a \leftarrow ka$, $b \leftarrow kb$, $c \leftarrow kc$ for a constant $k$, where we define $a(t+1)=ka(t)$, $b(t+1)=kb(t)$, or $c(t+1)=kc(t)$.

By convention, unspecified variables do not change, so if we say for instance the update $b \leftarrow g(a,b,c)$, $c \leftarrow h(a,b,c)$, this is the same as the update $a \leftarrow a$, $b \leftarrow g(a,b,c)$, $c \leftarrow h(a,b,c)$.  This matches the syntax of ``pseudocode" and of most programming languages.

Furthermore, in a slight abuse of notation, the value of the parameter $M$ will itself ``update" downward throughout the proof.  If one insists on being formal, we will define for any $M$ some sequence $M_0 \gg M_1 \gg M_2=M$ with foresight that grows sufficiently quickly, so that initially we will have $M_0$ as a size parameter, and then $M_1$, and so on, and finally $M_r=M$.  In algorithmic language, we initialize $M=M_0$ and then if $M=M_i$ we perform the update $M \leftarrow M_{i+1}$.  We can make $M_r$ as large as we like by choosing $M_0$ appropriately. We will not explicitly write out all of the $M_i$, instead proving intermediate results that hold with arbitrary choices of $M$ by handling the conditions of Proposition~\ref{prop:three} one at a time.  By renaming $M$ along the way, the notation will be a little bit more convenient, especially when we state and prove the most general case of the lemma.  Really we are considering a sequence of $M_i$, chosen with foresight.  All $M_i$ are sufficiently large functions of the number of colors of the initial coloring, and so we may suppress the dependence on the number of colors in the original coloring.  We now prove~\Cref{prop:three}.

\begin{proof}
    Let $\chi$ denote the initial coloring of $\mathbb{Q}$.  On the first step, we will find some initial $b=b(0)$ and $c=c(0)$ so that the third condition holds (one can either set $a(0)$ arbitrarily, or not define it).  The notation may be slightly intimidating, but all we are doing is applying the ordinary van der Waerden theorem to the appropriate product coloring.
            
    Consider the auxiliary product coloring of $\chi$ of some large size $M=M_0$, chosen with foresight: $$\chi_2(x):=\chi^{M}(x)=(\chi(Kx))_{K \in \mathbb{Q}, s(K) \leq M}.$$
     
    By van der Waerden's theorem, which is of course a special case of Theorem~\ref{thm:mpvdw}, we can find $b=b(0)$ and $c=c(0)$ so that $\chi_2(b+P(c))=\chi_2(b)$ for all linear combinations $P$ with $s(P) \le M$.
        
    Now, we claim that in the initial coloring $\chi$, the color of $P(b;c)$ will be the same as that of $P(b)$, if $s(P) \le M$.  This is because if $P(b;c)=k_1b+k_2c=k_1(b+\frac{k_2}{k_1}c)$ has $s(P) \le M$, then $\frac{k_2}{k_1}$ and $k_1$ have size at most $M$.  Hence, $b+\frac{k_2}{k_1}c=\frac{k_1b+k_2c}{k_1}$ will have the same color in $\chi_2$ as $b=\frac{k_1b}{k_1}$. Since $k_1$ also has size at most $M$, by the construction of $\chi_2$, we have that $k_1b+k_2c$ has the same color as $k_1b$ in the initial coloring $\chi$. Thus, for arbitrary choices of $M$, we have found $b,c$ that satisfy the third condition of Proposition~\ref{prop:three}.

We now aim to satisfy the first two conditions.  For technical reasons, we want our condition (modulo size constraints) to be closed under shifting $a$ by $\frac{b}{c}$, i.e. taking the update $a \leftarrow a+\frac{b}{c}$ and thus the update $ac \leftarrow ac+b$.  This sort of transformation ends up being necessary to handle terms like $P(ac; b)$.  This is also the main technical reason that our proof does not work in $\N$, since $\frac{b}{c} \notin \N$ in general.  Similarly, we need our operation to be closed under taking $a \leftarrow a+b$, $a \leftarrow a+c$, and $a \leftarrow a+bc$ to handle $P(a;b)$, $P(a;c)$, and $P(a;bc)$ respectively. So, we will actually end up showing something a little stronger.

\begin{enumerate}
\item The color of $P(a;\frac{b}{c},b,c,bc)$ is the same as that of $P(a)$.
\item The color of $P(ac;b, bc, c^2, bc^2)$ is the same as that of $P(ac)$.
\item The color of $P(b;c)$ is the same as that of $P(b)$.
\end{enumerate}

Recall that we have already defined some $b,c$ that satisfy the third condition with size parameter $M=M_0$, which we can take to be arbitrarily large.  We initialize the value $M'$ to be some parameter $M_1$, which can be taken to be as large as desired, but which is small compared to $M_0$. 

We now describe how to find an $a=a(1)$ that satisfies the first condition.  We will apply multidimensional polynomial van der Waerden (Theorem~\ref{thm:mpvdw}) in $\mathbb{Z}^4$.  Consider the coloring $\chi_3$ of $\mathbb{Z}^4$ given by $$\chi_3(q_1,q_2,q_3,q_4)=\chi^{M'}(q_1 \frac{b}{c}+q_2b+q_3c+q_4bc).$$

With 
$(p_1,p_2,p_3,p_4)=(n,n^2,n,n^3)$, multidimensional polynomial van der Waerden gives us some $u=(u_1,u_2,u_3,u_4) \in \mathbb{Z}^4$ and some $k \in \mathbb{N}$ such that the vectors $$(u_1,u_2,u_3,u_4)+\ell_1(k,0,0,0)+\ell_2(0,k^2b,0,0)+\ell_3(0,0,kc,0)+\ell_4(0,0,0,k^3bc)$$ for $\ell_i$ with $s(\ell_i) \le M' \ll M$ receive the same color under $\chi_3$, and hence the numbers

$$u_1 \frac{b}{c}+u_2b+u_3c+u_4bc+\ell_1k\frac{b}{c}+\ell_2k^2b+\ell_3kc+\ell_4k^3bc,$$ for $\ell_i$ with $s(\ell_i) \le M' \ll M$, all receive the same color under $\chi^{M'}$.  The number of colors used in $\chi_3$ is some function of $M'$ but is still much smaller than $M$.  The polynomial van der Waerden theorem guarantees that the $u_i$'s and $k$ are functions of $M'$ and are thus still much smaller than $M$. 

Defining $a=u_1\frac{b}{c}+u_2b+u_3c+u_4bc$ and performing the update $b \leftarrow k^2b$ and $c \leftarrow kc$ (i.e. setting $b(1)=k^2b(0)$ and $c(1)=kc(0)$, and now using $b,c$ to mean $b(1),c(1)$), we have that the color of $a+\ell_1\frac{b}{c}+\ell_2b+\ell_3c+\ell_4bc$ in $\chi^{M'}$ is the same as the color of $a$ as long as $s(\ell_1), s(\ell_2), s(\ell_3), s(\ell_4) \le M'$.  So $P(a;\frac{b}{c},b,c,bc)$ is the same color as $P(a)$ in $\chi^{M'}$ as long as $s(P) \le M$ and $P$ is monic.

Thus the color of $K(a+\ell_1\frac{b}{c}+\ell_2b+\ell_3c+\ell_4bc)$ in the original coloring $\chi$ is the same as the color of $Ka$ as long as $s(K) \le M'$ and $s(\ell_i) \le M'$ for $1 \le i \le 4$. In other words, $P(a;\frac{b}{c},b,c,bc)$ is the same color as $P(a)$ in $\chi$ whenever $s(P) \le M$. Clearly, this scaling of $b,c$ preserves the third condition, with $M$ replaced by a slightly smaller size bound that comfortably exceeds $M'$.

Hence we have found an $a,b,c$ so that the first and third conditions are satisfied with parameter $M'=M_1$.  Again by setting the starting parameters appropriately we can make it as large as possible, and in what follows we refer to $M_1$, which we have been calling $M'$, as $M$.  We will refer to $M_2$ as $M'$.  In other words, we perform the update $M \leftarrow M_1$, $M' \leftarrow M_2$.

It remains to satisfy the second condition.  We will again apply Theorem~\ref{thm:mpvdw} in $\mathbb{Z}^4$ to an appropriate auxiliary coloring.  This time, we use the coloring
$$\chi_4(q_1,q_2,q_3,q_4)=\chi^{M'}((a+q_1 \frac{b}{c}+q_2b+q_3c+q_4bc)c).$$  We factor out the $c$ so that we may shift $a$.

Applying Theorem~\ref{thm:mpvdw} with $(p_1,p_2,p_3,p_4)=(n^2,n^3,n^2,n^4)$ 
yields some $u=(u_1,u_2,u_3,u_4) \in \mathbb{Z}^4$ and some $k \in \mathbb{N}$, both functions of $M'$ and small compared to $M$, such that the numbers $$(a+u_1 \frac{b}{c}+u_2b+u_3c+u_4bc+\ell_1k^2\frac{b}{c}+\ell_2k^3b+\ell_3k^2c+\ell_4k^4bc)c,$$ all receive the same color under $\chi^{M'}$, when the $\ell_i$'s have size at most $M'$.  Shift $a$, performing the update $a \leftarrow a+u_1 \frac{b}{c}+u_2b+u_3c+u_4bc$, so that the numbers $$(a+\ell_1k^2\frac{b}{c}+\ell_2k^3b+\ell_3k^2c+\ell_4k^4bc)c$$ are all the same color under $\chi^{M'}$ whenever the $\ell_i$'s have size at most $M'$.

Finally, perform a scaling update $a \leftarrow a/k$, $b \leftarrow k^2b, c \leftarrow kc$, so that now the numbers $$(a+\ell_1\frac{b}{c}+\ell_2b+\ell_3c+\ell_4bc)c$$ all receive the same color in $\chi^{M'}$.  So $P(ac;b,bc,c^2,bc^2)$ has the same color as $P(ac)$ in $\chi^{M'}$ whenever $P$ is monic. Because $k$ is a function of $M'$ and is much less than $M$, this preserves the first and third conditions, again with a slight reduction in the size guarantee that still comfortably exceeds $M'$. By the definition of $\chi^{M'}$, if $s(K) \le M'=M_2$, then $K(a+\ell_1\frac{b}{c}+\ell_2b+\ell_3c+\ell_4bc)c$ has the same color as $Kac$, as desired.  In other words $P(ac;b,bc,c^2,bc^2)$ has the same color as $P(ac)$ in $\chi$ for all $s(P) \le M'=M_2$.  So all three conditions are satisfied with parameter $M'=M_2$.  Performing the final update $M \leftarrow M_2$, we have the desired result. \end{proof}

\section{Statement and proof of the main lemma}~\label{section:outline}
In this section, we state and prove the main lemma in its full generality. All of the necessary ideas in the proof are contained in the last section.  The difficulty is purely notational rather than conceptual, and indeed we need to define a little bit of new notation to deal with having more than constantly many variables $x_n, x_{n-1}, \cdots, x_1$ that depend on time $t$.  We rigorously define updates, shifts, and scaling for this general case of $n$ variables.  An \emph{update} $p(x_n, \cdots, x_1) \leftarrow q(x_n, \cdots, x_1)$ means that $p(x_n(t+1), \cdots, x_1(t+1))=q(x_n(t), \cdots, x_1(t))$.  A \emph{shift} is an update of the form $x_i \leftarrow x_i+q(x_{i-1}, \cdots, x_1)$ and a \emph{scaling} is one of the form $x_i \leftarrow kx_i$.  For notational convenience, we will suppress the time dependence.  Again one can think of this as an algorithm being executed by a computer program.  

If $S$ and $T$ are subsets of $[n]$, then we say $S>T$ if all elements of $S$ are larger than all elements of $T$.    In general we will need to consider good polynomials with more than constantly many monomials.  For a collection or family $\textbf{S}$ of sets $S$, we say that $P(x_i \prod_{j \in T}x_j; \prod_{j \in \textbf{S}}x_j)$ is $P(x_i \prod_{j \in T}x_j; \prod_{j \in S_1}x_j, \prod_{j \in S_2}x_j, \cdots )$ over all possible choices $S_1, S_2, \cdots$ of $S \in \textbf{S}$.  The sets $S_i \in \textbf{S}$ need not be disjoint.  Usually we will take $\textbf{S}$ to be all nonempty subsets of some set, typically $[n-1]$.  In a slight abuse of notation, we will also write $P\left(x_n;\frac{\prod_{j \in \textbf{S}}x_j}{\prod_{j \in \textbf{T}}x_j}\right)$ and specify constraints on $S \in \textbf{S}$ and $T \in \textbf{T}$ (generally something like $\emptyset \neq S>T$, where $\{n\}>S$) to mean that the RHS is actually a list of all $S \in \textbf{S}$ and $T \in \textbf{T}$ that satisfy these constraints.  As in the previous section, we also will define with foresight some sequence $M_t$ depending on the time parameter $t$.  Again, we will suppress the time parameter and just refer to $M$ and $M'$, which we will update and which will always be $M_r$ and $M_{r+1}$ respectively for some $r$.  As in the previous section, we will not explicitly write out how large $M_r$ needs to be in terms of $M_{r+1}$, but we will always be able to find $M_{r+1}$ as large as desired by taking a sufficiently large $M_r$, which we can guarantee in turn by taking a sufficiently large $M_{r-1}$, and so on and so forth.

We need a bit more notation.  We will have many $q_i, u_i, \ell_i$ in our applications of the polynomial van der Waerden theorem.  Instead of indexing the $q$, $u$, $\ell$ by $i$'s, we will typically index them by sets like $S$ and $T$.  As is standard, we use the notation $(q_i)_{i \in U}$ to make a vector of $q_i$'s for $i \in U$, so for example $(q_{S,T})_{S>T}$ means a vector of $q$'s, with a coordinate $q_{S,T}$ for every pair $S$ and $T$ of sets with $S>T$.  Of course we use similar notation for $u$ and $\ell$.  Without further ado, we present the statement of the main lemma.

\begin{proposition}
\label{prop:main}
For any finite coloring $\chi$ of $\mathbb{Q}$, any $M$, and any integer $n \ge 2$, there are numbers $x_n, x_{n-1}, x_{n-2}, \cdots, x_1 \in \mathbb{Q}$ so that for each $i \in [n]$ and $T \subseteq [i-1]$ (possibly empty), if $P$ is good with $s(P) \le M$ then the color of $P(x_i \prod_{j \in T}x_j; \prod_{j \in \textbf{S}}x_j)$ is the same as that of $P(x_i \prod_{j \in T}x_j)$, where $\textbf{S}$ consists of all nonempty subsets $S$ of $[i-1]$ so that $S>T$.
\end{proposition}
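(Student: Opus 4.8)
The plan is to generalize the three-step procedure of \Cref{prop:three} by induction on $n$, peeling off one variable at a time from $x_n$ down to $x_1$, and interleaving applications of (multidimensional) polynomial van der Waerden with the product-coloring trick that trades a large size budget $M_0$ for control of all rescalings $Kx$ with $s(K)\le M_0$. Concretely, I would set up a rapidly decreasing hierarchy of size parameters $M_0 \gg M_1 \gg \cdots \gg M_N = M$ chosen with foresight (with $N$ roughly the total number of conditions, which is $\sum_{i}2^{i-1}$), and process the pairs $(i,T)$ in a carefully chosen order: the conditions indexed by larger $i$ first, and within a fixed $i$, the conditions with larger $T$ first. At each stage I will have already fixed $x_n,\dots,x_{i+1}$ (up to final rescalings) and will be choosing $x_i$; the conditions with index $i$ all assert that shifting $x_i\prod_{j\in T}x_j$ by the various products $\prod_{j\in S}x_j$ (with $\min S > \max T$, all indices in $[i-1]$) preserves color after applying any good $P$ with $s(P)\le M$.

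The key maneuver, exactly as in the $n=3$ case, is that to shift $x_i\prod_{j\in T}x_j$ by $\prod_{j\in S}x_j$ we must shift the ``base variable'' $x_i$ by $\prod_{j\in S}x_j \big/ \prod_{j\in T}x_j$, and doing so simultaneously perturbs $x_i\prod_{j\in T'}x_j$ for every other $T'\subseteq[i-1]$ by $\prod_{j\in S}x_j \cdot \prod_{j\in T'\setminus T}x_j \big/ \prod_{j\in T\setminus T'}x_j$. So the honest statement to prove at stage $i$ is the strengthening in which, for \emph{every} $T\subseteq[i-1]$ simultaneously, the color of $P\big(x_i\prod_{j\in T}x_j;\ \prod_{j\in S}x_j\cdot\prod_{j\in T'\setminus T}x_j\big/\prod_{j\in T\setminus T'}x_j\big)$ matches $P\big(x_i\prod_{j\in T}x_j\big)$, the ``$;$'' arguments ranging over the finite family of monomials in $x_1,\dots,x_{i-1}$ that arise this way. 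I would handle all of these $2^{i-1}$ conditions in turn (largest $T$ first) by: forming a product coloring $\mathcal{X}$ in the style of $\chi_2$ at the current size level; pulling back along the map $(q_1,\dots,q_d)\mapsto \mathcal{X}\big(\text{(relevant monomial prefactor)}\cdot(x_i+\sum_k q_k\mu_k)\big)$ to a coloring of $\mathbb{Z}^d$, where the $\mu_k$ are the finitely many monomials in $x_1,\dots,x_{i-1}$ appearing; invoking \Cref{thm:mpvdw} with polynomials $p_k(n)=n^{e_k}$ whose exponents $e_k$ are chosen so that the common dilation by $k^{e_k}$ can be absorbed by rescaling each $x_j$ ($j<i$) by a single power of $k$ — this is the only place where the precise exponents matter, and they must be picked so that the rescaling that fixes one monomial's dilation is consistent across all monomials, exactly as $(n,n^2,n,n^3)$ and $(n^2,n^3,n^2,n^4)$ were matched to the rescaling $(b,c)\mapsto(k^2b,kc)$ in \Cref{prop:three}; then replacing $x_i$ by $x_i + u\cdot(\text{monomials})$ and rescaling. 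Each such step costs one level of the hierarchy and, crucially, every step after it only rescales the already-processed variables by small (compared to the current $M_\ell$) factors $k$, so all previously established conditions survive.

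The main obstacle — and the heart of the argument — is the bookkeeping that shows (a) the monomial prefactors and shift-monomials at stage $i$ really do form a finite family closed enough that one application of multidimensional polynomial van der Waerden with a single auxiliary variable $n$ suffices, and (b) that there is a consistent assignment of exponents $e_k$ to the $\mu_k$'s and a single rescaling rule $x_j \mapsto k^{f_j}x_j$ realizing all the required dilations simultaneously, for \emph{every} monomial prefactor that occurs, so that the rescaling used to clean up stage $i$ does not destroy stages $>i$. Once the exponent arithmetic is set up — I expect $e_k$ to be essentially the total degree of $\mu_k$ under a suitable weighting, mirroring how $\frac{b}{c},b,c,bc$ got exponents $1,2,1,3$ so that $b\mapsto k^2b$, $c\mapsto kc$ worked — the rest is a routine iteration of the two tools already used in \Cref{prop:three}, and the explicit bound on the $M_\ell$'s (hence on $s(x_i)$) falls out of counting the $\sum_i 2^{i-1}$ stages and the blow-up in each product coloring. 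I would also remark, as the paper does for $n=3$, that one may work in $\mathbb{Q}^{+}$ throughout to avoid division by zero, since there is ample freedom in each van der Waerden step to keep all variables nonzero.
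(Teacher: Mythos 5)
Your toolkit and bookkeeping are essentially those of the paper: the strengthened family of expressions $P\bigl(x_i\prod_{j\in T'}x_j;\ \prod_{j\in S}x_j\prod_{j\in T'}x_j/\prod_{j\in T}x_j\bigr)$, the auxiliary product colorings, \Cref{thm:mpvdw} with exponents matched to a rescaling $x_j\mapsto k^{y_j}x_j$ for fast-growing weights, and a decreasing hierarchy of size parameters so that later shifts and rescalings only modestly erode earlier conditions. However, the order in which you fix the variables is inverted, and as written this is not a cosmetic slip but a step that fails. You propose to have $x_n,\ldots,x_{i+1}$ already fixed and to choose $x_i$ while establishing the index-$i$ conditions; but those conditions, and the auxiliary coloring $(q_1,\ldots,q_d)\mapsto\mathcal{X}\bigl(\text{prefactor}\cdot(x_i+\sum_k q_k\mu_k)\bigr)$ that you want to pull back, are built from monomials $\mu_k$ in $x_1,\ldots,x_{i-1}$ --- variables that in your order have not been chosen yet, so the coloring you feed to van der Waerden is undefined. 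Nor can you defer: polynomial van der Waerden lets you choose the base point $u$ \emph{after} the directions $v_k$ are specified, never the reverse; once a base point is fixed, no Ramsey-theoretic mechanism guarantees directions along which all small shifts of that fixed point keep its color (color one point red and everything else blue). So the small-index variables, which play the role of the shift directions, must be in hand before the large-index variable, which plays the role of the base point.

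The paper accordingly runs the induction the other way: $x_{n-1},\ldots,x_1$ come from the induction hypothesis, and the inductive step creates $x_n$ --- first for the prefactor $T'=\emptyset$ via \Cref{thm:mpvdw}, using dilation factors $k^{\sum_{j\in S}y_j-\sum_{j\in T}y_j}$ whose exponents are positive because $\min S>\max T$ and the $y_j$ grow rapidly, and then for the remaining prefactors $T'$ one at a time by shifting $x_n$ and rescaling $x_j\mapsto k^{y_j}x_j$, dividing $x_n$ by the appropriate power of $k$; the previously established conditions (both the inductive ones and the earlier $T'$ steps) survive these small rescalings because of the product-coloring slack, exactly as in your parameter hierarchy. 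With the order of the stages reversed in this way, the rest of your outline --- the strengthened family, the exponent/weighting heuristic, and the count of roughly $2^{i-1}$ sub-stages per variable --- matches the paper's argument.
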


The statement is perhaps daunting on first glance, so the reader may wish to think about the $n=4$ case.  If $x_4=a, x_3=b, x_2=c, x_1=d$, this case states that if $s(P) \le M$, then 

\begin{enumerate}
\item The color of $P(a;b,c,d,bc,cd,bd,bcd)$ is the same as that of $P(a)$.
\item The color of $P(ac;b)$ is the same as that of $P(ac)$.
\item The color of $P(ad;b,c,bc)$ is the same as that of $P(ad)$.
\item The color of $P(acd;b)$ is the same as that of $P(acd)$.
\item The color of $P(b;c,d,cd)$ is the same as that of $P(b)$.
\item The color of $P(bd;c)$ is the same as that of $P(bd)$.
\item The color of $P(c;d)$ is the same as that of $P(c)$.
\end{enumerate}

The $n=3$ case  was proved in the previous section with $x_3=a, x_2=b, x_1=c$.  The $n=2$ case with $x_2=b$ and $x_1=c$ is the ``third bullet point" from the proof of the $n=3$ case, and is just van der Waerden on the auxiliary product coloring.

The reader can check that in general for $n$ variables, our bulleted list would have $2^{n-1}-1$ items.  While the number of terms is large, the proof strategy is quite similar to what we have already done.  One should think of $x_n \gg x_{n-1} \gg \cdots \gg x_1$, and the proof can be easily modified to guarantee this.  Induction allows us to handle all of the terms that do not involve the ``biggest" variable $x_n$.  Again, this is precisely what we did in the last section, handling the condition involving $P(b;c)$ first, proving the $n=2$ case as the first step of proving the $n=3$ case.  After this, the terms involving the biggest variable are handled.  While this can be done in any order, we find it easiest to think about handling the case where $T$ is the empty set first, i.e. the terms $P(x_n; \prod_{j \in \textbf{S}}x_j)$ where $\textbf{S}$ consists of all nonempty subsets $S$ of $[n-1]$ (equivalently, $\emptyset \neq S<\{n\}$), because it is slightly simpler and does not involve ``scaling" $x_n$.  For $n=3$, this was the ``first bullet point" $P(a;\frac{b}{c},b,c,bc)$.  Then we handle the other terms by one.  As aforementioned, at each step we lose something in the value of the size parameter $M$, but this is fine, since we chose $M_0$ to be large enough with foresight.


We can always take the smallest final value of $M$ to be as large as desired, and certainly large compared to both $n$ and the number of colors in the original coloring $\chi$. Hence, we suppress the dependence on $n$ and on the number of colors in $\chi$, treating these as constants.  In the previous section we proved the $n=3$ case but again suppressed the dependence on the number of colors.  If it makes the argument easier to follow, the reader may think about proving the lemma for $n=100$ where there are $100$ colors.


\begin{proof}
We use induction on $n$ to prove the proposition in general.  By the induction hypothesis, we have found $x_{n-1}, x_{n-2}, \cdots, x_2, x_1$ satisfying the proposition, where $n \ge 4$.

The first order of business is to establish the proposition for the case where $i=n$ and $T=\emptyset$, i.e. for $P(x_n; \prod_{j \in S}x_j)$, where $S<\{n\}$ ranges over all nonempty subsets of $[n-1]$.

Looking ahead to the term $P(x_n \prod_{j \in T}x_j; \prod_{j \in S}x_j)$, we must in fact establish the conclusion of the proposition for expressions of the form

$$P\left(x_n;\frac{\prod_{j \in \textbf{S}}x_j}{\prod_{j \in \textbf{T}}x_j}\right)$$ where $S \in \textbf{S},T \in \textbf{T}$ range so that $\{n\}>S>T$ and $S$ is nonempty.


Because of this, we must also control all terms of the form
\begin{equation}\label{eq:1}
P\!\left(
  x_n \prod_{j \in T'} x_j \,;\,
  \frac{\prod_{j \in \textbf{S}} x_j \prod_{j \in T'} x_j}{\prod_{j \in \textbf{T}} x_j}
\right).
\end{equation}
Here we take $S \in \textbf{S}$ and $T \in \textbf{T}$, with $S$ nonempty, $\{n\}>S>T$, and $T'<\{n\}$ a (possibly empty) subset of $[n-1]$.

While perhaps a little bit notationally daunting, all we are doing is writing down the ``obvious" strengthening of the lemma so that we may perform our ``shifting" 	and ``scaling" operations.  This is exactly what we did in the previous section.  We again stress that there are really no new conceptual ideas in this proof, and that this division is why the proof does not work over $\N$.

We first handle the case where $T'$ is empty. In the last section where $n=3$, this was the ``first bullet point", where we ensured that $P(a;b,c,bc,\frac{b}{c})$ is the same color as $P(a)$.  The astute reader may notice that it is not strictly necessary to do this case first, and later we will show how to handle all $T'$ in any order, without any assumptions on whether $T'=\emptyset$ or not.  Nonetheless, we find it more intuitive to present this simpler case first.

We define the usual auxiliary product coloring.  Here we will use $(q_{S,T})_{\emptyset \neq S>T, \{n\}>S}$ to denote a vector of $q_{S,T}$ with a coordinate for every pair $S,T$ of subsets with $\emptyset \neq S>T$ and $\{n\}>S$.

 $$\chi'\left((q_{S,T}\right)_{\emptyset \neq S>T, \{n\}>S})=\chi^M\left( \sum_{\emptyset \neq S>T, \{n\}>S} q_{S,T} \frac{\prod_{j \in S}x_j}{\prod_{j \in T}x_j}\right).$$ 

We use Theorem~\ref{thm:mpvdw} to pick $x_n$ to be a linear combination $\sum_{\emptyset \neq S>T, \{n\}>S} u_{S,T} \frac{\prod_{j \in S}x_j}{\prod_{j \in T}x_j}$ where $(S,T)$ range with $\emptyset \neq S>T$ and $\{n\}>S$ and $u_{S,T}$ depend on $M$ so that if $s(P) \le M$, then the color in $\chi'$ of $$P\left(x_n;k^{\sum_{j \in S}y_j-\sum_{j \in T}y_j}\frac{\prod_{j \in \textbf{S}}x_j}{\prod_{j \in \textbf{T}}x_j}\right)$$ is the same as that of $P(x_n)$ if $P$ is monic, for some integer $k$ that depends on $M$, and some sufficiently fast-growing sequence $y_j$ (for instance, $y_j=2^j$ suffices), where $S \in \textbf{S}$ and $T \in \textbf{T}$ range with $\emptyset \neq S>T$ and $\{n\}>S$.  Henceforth, we will not explicitly repeat this constraint on $S \in \textbf{S}$ and $T \in \textbf{T}$, but it will be understood in all expressions with $\textbf{S}$ and $\textbf{T}$.

Crucially, the exponent $\sum_{j \in S}y_j-\sum_{j \in T}y_j$ will be positive, which is necessary for the application of polynomial van der Waerden.  Thus, in $\chi$, the color of $$P\left(x_n;k^{\sum_{j \in S}y_j-\sum_{j \in \textbf{T}}y_j}\frac{\prod_{j \in \textbf{S}}x_j}{\prod_{j \in \textbf{T}}x_j}\right)$$ is the same as that of $P(x_n)$ if $s(P) \le M$.

Perform the scaling update $x_j \leftarrow x_jk^{y_j}$ for all $1 \le j<n$, so that the color of $$P\left(x_n;\frac{\prod_{j \in \textbf{S}}x_j}{\prod_{j \in \textbf{T}}x_j}\right)$$ is now the same as that of $P(x_n)$ for $s(P) \le M$. 

  Again, while the notation is a bit intimidating, what we did is exactly the same as the ``first bullet point" of the last section, where $P(a;b,c,bc,\frac{b}{c})$ is the same color as $P(a)$.  We just need to choose the degrees $y_j=2^j$ to make sure that when we are applying the polynomial van der Waerden, the polynomials have a positive degree.

With this out of the way, we now handle the other choices of $T'$ one by one, in an arbitrary order.  Because there are so many terms now, it is useful to rigorously define what ``handling" means.  We say that $T'$ is \emph{$M$-handled} (or sometimes just ``handled" if $M$ is clear from context) if $P\left(x_n\prod_{j \in T'}x_j;\frac{\prod_{j \in \textbf{S}}x_j\prod_{j \in T'}x_j}{\prod_{j \in \textbf{T}}x_j}\right)$ is the same color as $P(x_n\prod_{j \in T'}x_j)$ whenever $s(P) \le M$.  In other words, the expression \eqref{eq:1} corresponding to $T'$ is controlled.

We have already handled $T'=\{ \emptyset \}$.  In other words, if we let $\mathbb{T}$ be the family of handled $T'$, we have shown how to expand it from $\emptyset$ to $\{ \emptyset\}$.  We will handle the remaining choices of $T'$ one by one, performing updates of the form $\mathbb{T} \leftarrow \mathbb{T'} \cup T'$.

All we are doing here is going through the ``bulleted list" as in the last section, but now there are exponentially many items on the list.  The handled $T'$ are just the ``bullet points" we have already dealt with.  So far, we have only dealt with $T'=\{\emptyset\}$, the ``first bullet point" from the last section.  The family $\mathbb{T}$ just corresponds to the bullet points that we have gone through so far, which are labelled with the various $T'$.

Let us consider $T'$ which has not been handled yet.  We want to ensure that

$$P\left(x_n\prod_{j \in T'}x_j;\frac{\prod_{j \in \textbf{S}}x_j\prod_{j \in T'}x_j}{\prod_{j \in \textbf{T}}x_j}\right)$$ is the same color as $P(x_n \prod_{j \in T'}x_j)$ as long as $s(P) \le M' \ll M$.  In other words, $T'$ will be $M'$-handled.  What we do will be completely analogous to, and in fact a straightforward generalization of, what we did in the previous section to deal with the ``second bullet point" $P(ac;b,bc,c^2,bc^2)$ in the $n=3$ case.  Of course, the notation will be a little more complicated.  

The key difference from what we just did, the special case where $T'=\{\emptyset\}$, is that we need to perform a ``scaling" operation on $x_n$ when $T'$ is nonempty.  This is because, to apply the polynomial van der Waerden, we need to factor out $\prod_{j \in T'}x_j$ before shifting $x_n$, and then we need to scale $x_n$ to compensate for the scaling of $\prod_{j \in T'}x_j$.  Again, this is not new.  In the ``second bullet point" $P(ac;b,bc,c^2,bc^2)$ of the previous section we had to factor out $c$ before shifting $a$, and then scale $a$ afterwards to deal with the fact that $c$ was scaled.

As in the last section, we also need to ensure that our earlier conditions are preserved, albeit with a slightly smaller size guarantee.  We proceed to our final application of the polynomial van der Waerden theorem.

We define the auxiliary product coloring, again over all $\emptyset \neq S>T$ and $S<\{n\}$: $$\chi'\left((q_{S,T}\right)_{\emptyset \neq S>T}, S<\{n\})=\chi^M\left( \left( x_n + \sum_{\emptyset \neq S>T, S<\{n\}} q_{S,T} \frac{\prod_{j \in S}x_j}{\prod_{j \in T}x_j} \right) \prod_{j \in T'} x_j \right).$$

Using Theorem~\ref{thm:mpvdw}, we find some $u_{S,T}$'s and some $k$, both depending on $M'$ (and so $ \ll M$) so that, where $y_j=2^j$:

$$ \left(x_n + \sum_{\emptyset \neq S>T, \{n\}>S}u_{S,T} \frac{\prod_{j \in S}x_j}{\prod_{j \in T}x_j}+\sum_{\emptyset \neq S>T, \{n\}>S}k^{\sum_{j \in S}y_j-\sum_{j \in T}y_j+\sum_{j \in T'}y_j} \ell_{S,T} \frac{\prod_{j \in S}x_j}{\prod_{j \in T}x_j} \right) \prod_{j \in T'} x_j$$ are all the same color in $\chi'$ whenever $s(\ell_{S,T}) \le M'$ for all $\emptyset \neq S>T$ and $\{n\}>S$.

Perform the shift $$x_n 
\leftarrow x_n + \sum_{\emptyset \neq S>T, \{n\}>S}u_{S,T} \frac{\prod_{j \in S}x_j}{\prod_{j \in T}x_j}$$ and then the scaling operations $x_j \leftarrow x_j k^{y_j}$ for $j<n$.  Lastly, performing the scaling operation 

$$x_n \leftarrow \frac{x_n}{k^{\sum_{j \in T'}y_j}}.$$

Conceptually, the first shift of $x_n$ removes the $u_{S,T}$ terms, the scaling of $x_j$ for $j<n$ removes the powers of $k$, and the scaling of $x_n$ deals with factors of $k$ on the outside that arise from the $\prod_{j \in T'}x_j$ because of the scaling of $x_j$ for $j \in T'$.

After these updates, we have the condition without the $u_{S,T}$ and powers of $k$:

$$ \left(x_n + \sum_{\emptyset \neq S>T, \{n\}>S}\ell_{S,T} \frac{\prod_{j \in S}x_j}{\prod_{j \in T}x_j} \right) \prod_{j \in T'} x_j$$ are all the same color in $\chi'$ whenever $s(\ell_{S,T}) \le M'$, for all $\emptyset \neq S>T, \{n\}>S$.  By the definition of $\chi'$, this means that $T'$ has now been $M'$-handled.

For $T_1 \in \mathbb{T}$ that have been previously $M$-handled, because $\ell_{S,T}$ and $k$ depend only on $M'$, the terms: $$P\left(x_n\prod_{j \in T_1}x_j;\frac{\prod_{j \in \textbf{S}}x_j\prod_{j \in T_1}x_j}{\prod_{j \in \textbf{T}}x_j}\right)$$ 

will still be handled after these operations, albeit with a smaller size guarantee.  Since $M \gg M'$, this guarantee comfortably exceeds $M'$, since $M$ is chosen to be large enough.  Similarly, the size guarantee for the terms that do not involve $x_n$ shrinks, but is still comfortably at least $M'$ since we chose $M$ to be sufficiently large.  Hence, we $M'$-handle $T'$ , but every previously $M$-handled $T_1 \in \mathbb{T}$ is still $M'$-handled.  Of course, here the size guarantee $M=M_{t-1}$ drops to $M'=M_t$, but is still as large as we desire.  From now on, we use $M$ to refer to $M_t$ and $M'$ to refer to $M_{t+1}$. In other words, we perform the update $M \leftarrow M_t$ and $M' \leftarrow M_{t+1}$.  If $\mathbb{T}$ is the set of handled sets, we also are performing the update $\mathbb{T} \leftarrow \mathbb{T} \cup T'$, as promised.

In this way, we can handle all the remaining terms $P(x_i\prod_{j \in T'}x_j; \prod_{j \in \textbf{S}}x_j)$ in any order, where $S \in \textbf{S}$ ranges over all nonempty subsets $\{n\}>S>T'$.  Thus, at the end, for an $M$ as large as desired, we have $M$-handled the family $\mathbb{T}$ of all possible $T'$.  Furthermore, we still have our desired conditions with size parameter $M$ for the terms that do not involve $x_n$.  Hence, we are done. \end{proof}

\section{Proof of main result}

We will prove the main result in four steps.

\vspace{1em}
\noindent \textbf{Step I:} By Proposition~\ref{prop:main}, for arbitrary $n$, we can find $x_n, \cdots, x_1$ so that $x_i \prod_{j \in T}x_j+P(\prod_{j \in \textbf{S}}x_j)$ is the same color as $x_i \prod_{j \in T}x_j$, whenever $P$ has coefficients $0$ and $1$, and $\textbf{S}$ is the family of subsets $S$ of $[n]$ with $\{i\}>S>T$.  


\vspace{1em}
\noindent \textbf{Step II:} By Ramsey's theorem, we can pass to a subset of the $x_i$ so that the color of $x_{i_1}x_{i_2} \cdots x_{i_k}$ depends only on $k$ as long as $k \le N$ for some large $N$.  We do this by iteratively applying the Ramsey theorem for $k$-element subsets. First, apply the one dimensional Ramsey theorem to the set $X_0 = \{x_i: i \le n\}$ to find a monochromatic set $X_1 \subseteq X_0$. Next apply the two-dimensional Ramsey theorem to the set $X_1^2$ with
the coloring $\chi'(x_i, x_j) = \chi(x_i x_j)$ to find a set $X_2 \subseteq X_1$ with $X_2$ monochromatic according to this coloring. Continuing in this way gives the desired subsequence, and we may assume it is of size at least $N$ if the initial $n$ is large enough.


Renaming variables, we can find $x_N, \cdots, x_1$ so that for all $k \le N$, there is a color $f(k)$ so that $x_i \prod_{j \in T}x_j+P(\prod_{j \in \textbf{S}}x_j)$ has color $f(k)=f(|T|+1)$, where $i \le N$, $S \in \textbf{S}$ ranges so that $S$ and $T$ are disjoint sets of $[i-1]$ so that $S$ is nonempty and $\{i\}>S>T$, and $P$ is any linear combination with coefficients $0$ and $1$ as in the previous step.  In particular, the color of every monomial $x_i\prod_{j \in T}x_j$ will only depend on its degree $|T|+1 \le N$.  Again, we can make $N$ as large as desired by setting $n$ appropriately.

\vspace{1em}
\noindent \textbf{Step III:} 

Given the sequence $x_N, \cdots, x_1$ from the previous step, consider the auxiliary coloring of $[N]=\{1,2, \cdots, N\}$ where $k$ receives the color $f(k)$ of some (and hence, by the previous step, any) $\prod_{j \in T}x_j$ where $|T|=k$ and $T \subset [N]$.  We can take $N$ to be as large as desired.  Applying Folkman's theorem to this coloring, we can find some $a_1, \cdots, a_r$ so that all terms of the form $\sum_{i \in S}a_i$ are monochromatic and in $[N]$.  We can guarantee that $r$ is as large as desired by choosing $N$ appropriately.

By our choice of $x_N, \cdots, x_1$ in the previous step, this means that all products of $\sum_{i \in S}a_i$ many $x_i$ are the same color (say, red) for any choice of $S \subset [r]$.  For minor technical reasons, we will in fact assume that $\sum_{i \in S}a_i<N/100$.  Again this is not an issue as we can set $N$ as large as desired.

%
%

\vspace{1em}
\noindent \textbf{Step IV:} Now, pick (possibly empty) sets $S_i$ of size $a_r-1$ for $1 \le i \le r$, so that $S_{i+1}>S_i$ and $\{\frac{N}{2}\}>S_r$.  In other words, everything in $S_{i+1}$ is larger than everything in $S_i$, and everything in $S_r$ and hence in all of the $S_i$ is less than $\frac{N}{2}$. Since $\sum_{i \in S}a_i<N/100$, certainly such $S_i$ exist.  

For $1 \le i \le r$, let $X_i=x_{N-i}\prod_{j \in S_i}x_j$. Any product of a subset $S$ of the $X_i$ will be a monomial of degree $\sum_{i \in S}a_i$ and so will be red by our application of Folkman's theorem. Any sum of a subset $S=\{X_{i_1}, \cdots, X_{i_s}\}$ of the $X_i$, for $i_1>i_2> \cdots >i_s$ will be of the form $$x_{N-i_s}\prod_{j \in S_{i_s}}x_j+P(x_{N-i_1}\prod_{j \in S_{i_1}}x_j, \cdots, x_{N-i_{s-1}}\prod_{j \in S_{i_{s-1}}}x_j)$$ where $P$ is a polynomial with coefficients $0$ and $1$. In other words, the second term is a finite (and possibly empty) sum of $x_{N-i_t} \prod_{j \in S_{i_t}}x_j$ for $t<s$.

This is red by our assumption. Renaming $X_i$ as $x_i$ (performing the ``update" $X_i \leftarrow x_i$), and renaming the number of variables $r$ as $n$ (performing the ``update" $r \leftarrow n$) finishes the proof.

\section{Discussions and open problems}

We first note that our proof can be extended to give some more patterns.  The argument in this paper straightforwardly also gets $\sum_{i \le m} \prod_{j \in S_i}x_j$ to be monochromatic, as long as all of the elements of $S_i$ are less than all of the elements of $S_{i+1}$. Furthermore, Bowen-Sabok \cite{bs} ask about patterns like $\{x+P(y), y,xy\}$ where $P$ ranges over a finite set of monomials.  This can also be handled with the argument here.  If we consider expressions like $P(x_i\prod_{j \in T}x_j;\prod_{j \in S}x_j, \prod_{j \in S}x_j^2)$ and not just $P(x_i\prod_{j \in T}x_j;\prod_{j \in S}x_j)$, the partition regularity of $\{x, x+y, x+y^2, y, xy\}$ over $\mathbb{Q}$ follows.  By considering slightly more general $(S,T)$ with $\max_{j \in S}j>\max_{j \in T}j$, Hunter \cite{hunter} proved the partition regularity over $\mathbb{Q}$ of sums of distinct products, i.e. $\sum_{S}\prod_{i \in S}x_i$ so that no $i$ appears in more than one $S$.  It is also worth noting that in an earlier paper \cite{alweiss}, the author proved that $\{x,x+y,xy\}$ was partition regular over $\mathbb{N}$ with effective bounds, essentially by considering a more restricted family of $(S,T)$ than the ones we consider here, so that the argument goes through over the integers.


Interestingly, our proof here implicitly also proves that expressions like $\{dx, dx+d^2\}$ are partition regular over $\mathbb{Q}$, and also over $\mathbb{N}$.  To our knowledge, this is an original result, although the proof is simple.  We include a self-contained explicit proof here.

\begin{theorem}
	The pattern $\{dx, dx+d^2\}$ is partition regular.
\end{theorem}

\begin{proof}

Fix two integers $x,d$, both divisible by $N!$ for a sufficiently large $N$. Given a coloring $f$ of the integers, define a new coloring $g$ of the integers by $g(C)=f(dx+Cd^2)$. By Theorem~\ref{thm:mpvdw} with $p(n)=n^2$, we can find some $k$ and $c$ bounded by $N$ so that $g(k)=g(k+c^2)$.  By the definition of $g$, this means that $f(dx+kd^2)=f(dx+(k+c^2)d^2)=f(dx+kd^2+(cd)^2)$. Letting $x/c+kd/c=X$ and $cd=D$, we get $DX, DX+D^2$ are the same color.  We may rename $D,X$ as $d,x$. \end{proof}

A very similar argument shows that $\{Q(d)(X+P_1(d)), \cdots, Q(d)(X+P_k(d))\}$ is partition regular for any monic polynomial $Q$ and any family of $P_i$ with $P_i(0)=0$. One interesting question, which we suspect would help prove~\Cref{hindman}, is whether we can extend this argument to deal with $Q(d)(X+P_i(d))$ where $Q(0)=0$ but $Q$ is not necessarily monic. As aforementioned, in this paper we only ever use the polynomial van der Waerden theorem for monic polynomials.

\begin{question}
	Let $Q, P_1, \cdots, P_k$ be polynomials with $Q(0)=0$ and $P_i(0)=0$ for all $1 \le i \le k$.  Is the pattern $\{Q(d)(x+P_1(d)), \cdots, Q(d)(x+P_k(d))\}$ necessarily partition regular?
\end{question}

We specifically conjecture the case where $Q(d)=d^2+d$, $k=2$, $P_1(d)=0$, $P_2(d)=d$.

\begin{conjecture}
	The pattern $\{(d^2+d)x, (d^2+d)(x+d)\}$ is partition regular.
\end{conjecture}

It might be fruitful to come up with a different proof of the polynomial van der Waerden theorem, one not so reliant on the presence of short cycles in Cayley graphs, which is necessary for the usual color focusing arguments.  As such, we propose another conjecture.

\begin{conjecture}
	Let $G$ be the ``square-difference graph" on $\mathbb{Z}$, i.e. the graph in which $x$ and $x+d^2$ are adjacent for all $x,d \in \mathbb{Z}$ with $d \neq 0$.  Then $G$ has induced subgraphs of arbitrarily large girth and arbitrarily large chromatic number.
\end{conjecture}

Another interesting question is whether we can have a version of Theorem~\ref{thm:mpvdw} where we allow negative exponents on $d$.  We feel that for instance the following statement should be true.

\begin{conjecture}
	The pattern $\{x, x+d, x+\frac{1}{d}, x+d+\frac{1}{d}\}$ is partition regular over $\mathbb{Q}$.
\end{conjecture}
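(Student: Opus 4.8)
The plan is to derive the conjecture from a ``negative-exponent'' version of Theorem~\ref{thm:mpvdw}. Concretely, it would suffice to show that for any finite coloring of $\mathbb{Q}$ there exist $u,d\in\mathbb{Q}$ such that the four numbers $u+c_1 d+c_{-1}d^{-1}$, for $c_1,c_{-1}\in\{0,1\}$, all receive the same color; taking $x=u$ then gives the pattern $\{x,x+d,x+1/d,x+d+1/d\}$. A natural target generalizing the whole method of this paper would be: for any finite coloring of $\mathbb{Q}$ and any $M$ there are $u,d$ so that $u+\sum_{-\ell\le i\le \ell} c_i d^{i}$ is monochromatic as the $c_i$ range over rationals of size at most $M$ — i.e.\ the conclusion of the multidimensional polynomial van der Waerden theorem, but with the monomials $d^i$ allowed to carry negative exponents.

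The route I would attempt mirrors the $n=3$ argument of \Cref{prop:three}. Set up the usual auxiliary product coloring $\chi_2(x)=(\chi_1(Kx))_{s(K)\le M_0}$, and try to realize $d$ and $d^{-1}$ simultaneously as powers $d=k^{y}$, $d^{-1}=k^{-y}$ of a single integer $k$ produced by van der Waerden. On the cyclic multiplicative group $\langle k\rangle\cong\mathbb{Z}$ the inverse of a step is again a step, so the ``set of allowed steps'' is genuinely two-sided; the hope is to run a color-focusing argument in this setting, where one is permitted to shift a base point $u$ by $k^a$ for $a$ in a symmetric window $[-L,L]\cap\mathbb{Z}$ rather than only $[0,L]$, and then to absorb the rescalings (replacing $x_j$ by $x_jk^{y_j}$, as in \Cref{prop:three}) in a way that keeps the negative-exponent steps controlled.

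The step I expect to be the main obstacle — and the reason this remains a conjecture — is precisely transferring the two-sided symmetry of $\langle k\rangle$ through the logarithm into the additive van der Waerden machinery. Every tool available to us (polynomial van der Waerden, Folkman's theorem) produces additive steps that are \emph{polynomials} in an auxiliary integer $n$ with vanishing constant term; such a step can be forced to be large and highly divisible, but there is no mechanism to manufacture a step $n^{-1}$ that is added additively to a base point while remaining linked to a large step $n$ added to the same point. In the triangular framework of \Cref{prop:main} this barrier is explicit: the ratios $\prod_{j\in S}x_j/\prod_{j\in T}x_j$ whose addition we can control are forced to satisfy $\max S>\max T$, so a ratio and its reciprocal can never both be admissible steps, and enlarging the family to allow a pure reciprocal step (empty $S$) breaks the ``replace $x_j$ by $x_jk^{y_j}$'' normalization, since the relevant exponent $-\sum_{j\in T}y_j$ is then negative rather than positive. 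Settling even the three-point sub-pattern $\{x,x+d,x+1/d\}$, or the special case where $d$ is restricted to integers (so $1/d$ is a unit fraction), would already require this new input and would be a natural first step.
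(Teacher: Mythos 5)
This statement is not a theorem of the paper at all: it is stated there as an open conjecture (motivated by the question of whether \Cref{thm:mpvdw} admits a version with negative exponents on $d$), and the paper offers no proof of it --- indeed it remarks that even the three-term sub-pattern $\{x,x+d,x+\tfrac{1}{d}\}$ is not known to be partition regular. Your proposal, by your own account, does not prove it either. Your opening reduction (find $u,d$ with $u+c_1d+c_{-1}d^{-1}$ monochromatic for $c_1,c_{-1}\in\{0,1\}$) is simply a restatement of the conjecture, and the stronger target you name --- the conclusion of multidimensional polynomial van der Waerden with monomials $d^i$ for negative $i$ --- is exactly the missing ingredient the paper identifies; you supply no argument for it. The color-focusing sketch on the cyclic group $\langle k\rangle$ does not close this gap: passing to $\langle k\rangle$ makes the \emph{multiplicative} structure two-sided, but the pattern requires adding both $d$ and $d^{-1}$ to the same base point, and nothing in the sketch produces an additive step of the form $k^{-y}$ coupled to the step $k^{y}$.

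That said, your diagnosis of where the paper's machinery breaks is accurate and matches its own discussion: in \Cref{prop:main} the admissible ratios $\prod_{j\in S}x_j/\prod_{j\in T}x_j$ require $S$ nonempty with $\max S>\max T$, so the exponent $\sum_{j\in S}y_j-\sum_{j\in T}y_j$ used in the ``replace $x_j$ by $x_jk^{y_j}$'' normalization is positive, and allowing a pure reciprocal step would make it negative and destroy that normalization; likewise every tool used (polynomial van der Waerden, Folkman) only furnishes steps that are polynomials in an auxiliary integer with zero constant term. So what you have written is a correct explanation of why the problem is open, together with sensible weaker test cases, but it is an analysis of the obstruction rather than a proof of the statement.
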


Even without the $x+d+\frac{1}{d}$ term, this is not known.

\begin{conjecture}
	The pattern $\{x, x+d, x+\frac{1}{d}\}$ is partition regular over $\mathbb{Q}$.
\end{conjecture}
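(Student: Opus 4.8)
The plan would be to mirror the architecture of this paper: rewrite the configuration as a shift pattern with a single free parameter, push the ambient scaling into an auxiliary product colouring, and finish with the multidimensional polynomial van der Waerden theorem (Theorem~\ref{thm:mpvdw}). First rephrase the target: three rationals $a,b,c$ form a copy of $\{x,x+d,x+1/d\}$ (with $x=a$, $d=b-a$) if and only if $(b-a)(c-a)=1$; and if we write $d=e/f$ with $e,f$ coprime nonzero integers and set $X=efx$, then the requirement that $x,x+d,x+1/d$ be monochromatic is exactly the requirement that $X,\,X+e^{2},\,X+f^{2}$ be monochromatic in the recoloured space $Y\mapsto c(Y/(ef))$. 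So the conjecture is equivalent to: for every finite colouring $c$ of $\mathbb{Q}$ there are coprime nonzero integers $e,f$ and a rational $X$ such that $X$, $X+e^{2}$, $X+f^{2}$ receive the same colour under $Y\mapsto c(Y/(ef))$.

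At this point the proof in Proposition~\ref{prop:three} suggests forming a product colouring $\psi(Y)=\bigl(c(Y/m)\bigr)_{1\le m\le M_{0}}$ with $M_{0}$ huge, using Theorem~\ref{thm:mpvdw} with the polynomial $n\mapsto n^{2}$ to obtain $X$ and an $n$ with $X$ and $X+jn^{2}$ all one colour under $\psi$ for $j$ up to some bound, then reading off $e^{2},f^{2}$ among the $jn^{2}$ and unwinding $\psi$. The obstruction --- and the reason this is still only a conjecture --- is that this scheme needs two things that cannot both hold: on the one hand the shifts $e^{2},f^{2}$ must be allowed to grow without bound, because van der Waerden type theorems produce monochromatic shifts whose magnitude one cannot bound from below, and a pattern $\{x,x+d,x+1/d\}$ with $d$ confined to a fixed finite set is \emph{not} partition regular (those finitely many differences $d,1/d$ lie in a single copy $\tfrac1m\mathbb{Z}$ of $\mathbb{Z}$ inside $\mathbb{Q}$, and one colours $\mathbb{Q}$ coset by coset by a residue modulo a large prime); on the other hand the rescaling $Y\mapsto Y/(ef)$ must stay in a bounded range, because it is absorbed into a \emph{finite} product colouring. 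But $ef$ is at least as large as $\max(e,f)$, hence comparable to the shifts, so it is unbounded whenever the shifts are. In short, getting the $x+1/d$ term requires a monochromatic shift by the reciprocal of the parameter, i.e.\ a version of Theorem~\ref{thm:mpvdw} that allows negative exponents on $d$, which is precisely the open problem flagged just above the conjecture.

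I therefore expect the negative-exponent barrier to be essentially the whole difficulty, and I would not expect a short argument of the above shape to work. The tempting escapes seem to collapse: restricting $e,f$ to powers of one fixed prime $p$ makes $ef$ $p$-adically bounded, but even a single guaranteed monochromatic shift by a power of $p$ fails (colour $\mathbb{Z}$ by residue modulo $3$), so no prime-power-restricted van der Waerden theorem is on offer; and one cannot feed this through Proposition~\ref{prop:main}, since the monochromatic $x_{i}$ it produces have uncontrolled --- not prescribed --- products, so one cannot force a block product to equal $1$. A more realistic programme would be to attack the density version first, or to develop a van der Waerden statement internal to a structured multiplicative group that simultaneously yields shifts by $d$ and by $d^{-1}$; the companion non-monic ``$Q(0)=0$'' question raised by the authors looks of comparable depth and may be a natural warm-up or even a prerequisite.
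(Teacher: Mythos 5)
The statement you were asked about is one of the paper's open conjectures, not a theorem: the paper gives no proof of it and explicitly remarks, immediately before stating it, that even this weaker pattern (without the $x+d+\frac{1}{d}$ term) ``is not known.'' You were right not to manufacture an argument. Your reformulation is correct --- writing $d=e/f$ in lowest terms and setting $X=efx$ turns the pattern into $X$, $X+e^{2}$, $X+f^{2}$ under the coloring $Y\mapsto c\bigl(Y/(ef)\bigr)$ --- and your diagnosis of the obstruction matches the paper's own framing: polynomial van der Waerden produces shifts of unbounded size, while the dilation factor $ef$ would have to be absorbed into a \emph{finite} product coloring, which is exactly the ``negative exponents on $d$'' barrier the author raises just above the conjecture. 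Your side remark that the pattern with $d$ confined to a finite set is not partition regular is also sound (all relevant differences lie in some $\tfrac{1}{m}\mathbb{Z}$, and coloring each coset of $\tfrac{1}{m}\mathbb{Z}$ by residue modulo a large prime kills them). So there is no gap to flag relative to the paper: both you and the author leave the statement open, and for essentially the same reason.
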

 
There is hope to use the ideas in this paper to tackle~\Cref{hindman}, where even the simple case $n=2$ is open, and indeed Conjecture 4.1 of \cite{alweiss} essentially states that we \textit{must} use them, or at least that we have to use higher degree polynomials in some capacity, at least if the proof also works over the natural polynomial semi-ring.  We restate Conjecture 4.1 of \cite{alweiss} here, for completeness.  We need the following notation, which also appears in \cite{alweiss}.

\begin{definition} Let $P_d$ be the set of nonzero polynomials of a countable set of variables $x_1, \cdots, x_n, \cdots$ with nonnegative integer coefficients, zero constant term, and degree at most $d$ in each variable. \end{definition}

Conjecture $4.1$ of \cite{alweiss} states the following.

\begin{conjecture} For all $d \in \mathbb{N}$, the pattern $\{x, y, x+y, xy\}$ is not partition regular over $P_d$.
\end{conjecture}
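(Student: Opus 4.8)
We do not know how to prove this conjecture, but it is worth describing the natural line of attack and isolating where it stalls. The goal would be to exhibit, for each fixed $d$, a finite coloring of $P_d$ with no monochromatic quadruple $\{x,y,x+y,xy\}$, where one only has to worry about $x,y$ with $xy\in P_d$, i.e.\ with $\deg_v x+\deg_v y\le d$ for every variable $v$. The first data to track are the \emph{order} $\mu(p)$ of $p\in P_d$ (the least total degree of a monomial of $p$) and the \emph{degree} $D(p)$ (the greatest). Since all coefficients are nonnegative there is never any cancellation, so $\mu(x+y)=\min(\mu(x),\mu(y))$ and $D(x+y)=\max(D(x),D(y))$, whereas $\mu(xy)=\mu(x)+\mu(y)$ and $D(xy)=D(x)+D(y)$. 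Thus no configuration can have $\mu$ constant on it, nor $D$ constant on it, and one is tempted to color $P_d$ by a finite coloring of the pair $(\mu(p),D(p))$, or more generally through any semiring homomorphism $P_d\to\mathbb{N}$, such as the number of variables of $p$, or $p(1,1,\dots)$, or $p$ evaluated at a fixed point modulo a prime, reducing the problem on $P_d$ to a problem on $\mathbb{N}$.

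This cannot work, and seeing why pins down what a proof must do. For any finite coloring $c$ of $\mathbb{N}$, Schur's theorem (equivalently, Folkman's theorem with $n=2$) produces a monochromatic triple $a,b,a+b$, and if $c$ satisfies $c(n)\ne c(2n)$ for all $n$ one may even take $a\ne b$; such a triple can be realized inside $P_d$, since choosing two Schur triples $a<b$ and $p<q$ (for two auxiliary colorings, with $a\le p$ and $b\le q$) and setting $x=u_1\cdots u_a+u_1\cdots u_p$ and $y=w_1\cdots w_b+w_1\cdots w_q$ on disjoint fresh variables keeps every variable's degree equal to $1$, so automatically $xy\in P_d$, and then $x,y,x+y,xy$ all receive the same color under $p\mapsto(c_1(\mu(p)),c_2(D(p)))$. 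The same ``spread over fresh variables'' trick defeats any coloring factoring through a single $\mathbb{N}$-valued invariant. So a successful coloring must use the genuinely non-homomorphic structure of $P_d$: one wants to combine an order/degree coloring, arranged so that a monochromatic configuration forces $\mu(x),\mu(y)$ (and $D(x),D(y)$) into prescribed Schur-type triples, with a second coloring of some \emph{multiplicative complexity} of a polynomial, for instance the number of monomials $T(p)$, the product of the coefficients, or the coefficient data of the trailing form, each of which is essentially multiplicative on products and additive on sums; on a monochromatic configuration this complexity would then have to solve $\{t_1,t_2,t_1+t_2,t_1t_2\}$ over $\mathbb{N}$, and the hope is that once the order and degree of $x$ and $y$ are fixed the constraint $\deg_v x+\deg_v y\le d$ bounds that complexity, so a finite coloring of it kills the configuration.

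The main obstacle, and the reason the conjecture is open, is that this last hope is false: pinning $\mu$ and $D$ into Schur triples does not bound them, and even for fixed order, degree and number of variables a polynomial of $P_d$ can have an unbounded number of monomials, so the complexity coordinate ranges over an unbounded set and avoiding $\{t_1,t_2,t_1+t_2,t_1t_2\}$ there is exactly the open finite sums-and-products problem over $\mathbb{N}$. In short, $P_d$ still contains sub-semirings on which the pattern is forced to be partition regular (the Schur images, via degree data) alongside copies of $(\mathbb{N},+,\cdot)$ (living in the coefficients, or in $T(\cdot)$ when $d=1$), and any coloring that treats the ``degree'' structure and the ``coefficient'' structure independently is doomed. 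A proof would therefore need a coloring that genuinely links the two, certifying that no polynomial can be simultaneously a degree-wise member of a Schur triple and a coefficient-wise member of a sums-and-products quadruple; it is exactly the need for such a two-dimensional, non-pullback coloring, rather than anything descending to a quotient $P_d\to\mathbb{N}$, that we expect to be the crux, and that, on the intended reading of Conjecture~4.1 of \cite{alweiss}, signals why polynomial van der Waerden or a comparably arithmetic ingredient cannot be avoided in attacking~\Cref{hindman}.
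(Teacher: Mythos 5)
There is no proof here to check, and none to check it against: the statement you were given is a \emph{conjecture} (a restatement of Conjecture 4.1 of \cite{alweiss}), which the paper explicitly leaves open, and your submission opens by conceding that you do not know how to prove it. What follows in your text is a heuristic discussion of why certain natural colorings fail, not an argument establishing that $\{x,y,x+y,xy\}$ is not partition regular over $P_d$ for any $d$: you exhibit no coloring of $P_d$, and you prove no obstruction. So as a proof attempt it has a total gap -- the entire construction is missing -- and it cannot be graded as matching or diverging from the paper's argument, because the paper offers no argument either; the conjecture is stated precisely to formalize the author's belief that resolving~\Cref{hindman} requires tools (such as polynomial van der Waerden with unbounded degree) that leave the bounded-degree semiring $P_d$.

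On the content of the discussion itself: the observation that the order $\mu$ and degree $D$ behave like $(\min,+)$ and $(\max,+)$ on sums and products, and that any coloring pulled back through such $\mathbb{N}$-valued invariants is defeated by realizing Schur triples on fresh variables, is correct and is a sensible sanity check. But two caveats deserve mention. First, your claim that no configuration can have $\mu$ (or $D$) constant fails when constant terms are present, since then $\mu$ can equal $0$ on all four elements; this does not affect your negative conclusion, but it shows the invariants give even less traction than you suggest. Second, your closing assertion that avoiding $\{t_1,t_2,t_1+t_2,t_1t_2\}$ in the ``complexity coordinate'' is ``exactly the open sums-and-products problem over $\mathbb{N}$'' cuts both ways: it is an indication that the conjecture is delicate, not a step toward proving it. If you want to contribute something checkable, the minimal nontrivial target would be an explicit finite coloring of $P_d$ (even for $d=1$, or for the subsemiring generated by squarefree monomials) together with a verification that no monochromatic quadruple exists; absent that, this text is commentary on the problem rather than a proof of the statement.
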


Even the weaker version, that~\Cref{hindman} is false over $P_d$, is not known.  Of course the set $P_{\infty}$ of all such polynomials without the degree bound is not quite the same as $\N$.  It is perhaps possible to have a more ``number-theoretic" proof of~\Cref{hindman} that works over $\N$, where most naturals are not prime, but not over $P_{\infty}$, where most polynomials are irreducible.  A ``purely combinatorial" proof based only on polynomial identities will work just as well over $P_{\infty}$ as over $\N$.  It is worth mentioning that in the definition of $P_{\infty}$ one could equivalently just take the semi-ring $T$ of nonzero polynomials of a single variable with nonnegative coefficients and zero constant term, at least as far as partition regularity of polynomial patterns is concerned.  Partition regularity over $T$ is readily seen to be equivalent to partition regularity of $P_{\infty}$ by standard compactness arguments.

\begin{conjecture} There exists $n$ so that for all $d \in \mathbb{N}$, the pattern $\{\sum_{i \in S}x_i, \prod_{i \in S}x_i\}$, where $S$ ranges over nonempty subsets of $[n]$,  is not partition regular over $P_d$.
\end{conjecture}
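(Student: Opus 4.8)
The plan is to first observe that this conjecture is in fact \emph{equivalent} to the assertion that \Cref{hindman} is false, so that the real task is a clean reduction to $\mathbb{N}$ followed by an attack on the long-standing open problem there. One direction is immediate: the constant polynomials form a copy of $\mathbb{N}$ inside $P_d$ (computing sums and products of constants inside $P_d$ agrees with computing them in $\mathbb{N}$), so any finite coloring of $P_d$ with no monochromatic $n$-term subset-sums-and-products configuration restricts to such a coloring of $\mathbb{N}$. For the converse, fix distinct integers $p_1,p_2,\cdots\ge 2$ (say the primes) and use the evaluation map $\mathrm{ev}\colon P_d\to\mathbb{N}$, $f\mapsto f(p_1,p_2,\cdots)$. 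This is a semiring homomorphism, and it sends every nonzero element of $P_d$ to a positive integer, since a nonzero polynomial with nonnegative coefficients has positive value at positive arguments. Hence, if $\chi_0$ is a finite coloring of $\mathbb{N}$ admitting no monochromatic $n$-term configuration $\{\sum_{i\in S}x_i,\ \prod_{i\in S}x_i:\emptyset\ne S\subseteq[n]\}$, then $\chi_0\circ\mathrm{ev}$ is such a coloring of $P_d$, and the \emph{same} $\chi_0$ works for all $d$ at once. So it suffices to produce, for some fixed $n$, a finite coloring of $\mathbb{N}$ with no monochromatic $n$-term configuration of this form --- equivalently, to refute \Cref{hindman}.

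For that step, the plan would be to finitize and strengthen Hindman's 1979 coloring \cite{pairwise} and the recent construction of Hindman, Ivan and Leader \cite{maria}, which already rule out an \emph{infinite} sequence with monochromatic pairwise sums and products. The mechanism should interleave a ``multiplicative'' component of the coloring --- reading digit blocks of $x$ in a cleverly chosen base, or recording a bounded amount of $p$-adic valuation data --- which forces any configuration with all subset \emph{products} monochromatic to be rigidly multiplicatively structured, with an ``additive'' component in the spirit of the proofs of Hindman's and Folkman's theorems, which then forces the subset \emph{sums} to take many colors once $n$ is large relative to the number of colors. The place where this must diverge from the argument proving \Cref{qhindman} is exactly the absence of division: over $\mathbb{Q}$ one rescales freely to clear denominators and to inflate exponents (this is where Theorem~\ref{thm:mpvdw} enters), whereas over $\mathbb{N}$, and hence --- via the reduction above --- over $P_d$, no such rescaling is available. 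This is precisely what the conjectures about $x+\tfrac1d$ stated above are meant to probe.

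The main obstacle is entirely this second step: it is the negation of \Cref{hindman}, open since the 1970s, with even the case $n=2$ --- the partition regularity of $\{x,y,x+y,xy\}$ over $\mathbb{N}$ --- unresolved and, in view of Moreira's positive result \cite{moreira} for $\{x,x+y,xy\}$, genuinely delicate. The reduction in the first paragraph shows that the $P_d$ formulation adds nothing in logical strength; its value is heuristic, in that it isolates why the methods of this paper cannot by themselves settle \Cref{hindman}: run formally over $\mathbb{N}$ they would produce configurations lying inside $P_d$, so they could resolve the $\mathbb{N}$ problem only by also resolving the equivalent $P_d$ problem, which \cite{alweiss} conjectures is impossible. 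Settling the conjecture therefore appears to demand a genuinely new ingredient, plausibly a partition-regularity result tolerating negative powers of $d$ of the kind conjectured just above.
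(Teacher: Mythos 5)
This statement is not a theorem of the paper: it is stated as an open conjecture, and the paper says explicitly that even this weakening of Conjecture 4.1 of \cite{alweiss} ``is not known.'' Your proposal does not prove it either. After your first paragraph, the entire mathematical content is delegated to refuting \Cref{hindman} --- constructing, for some fixed $n$, a finite coloring of $\mathbb{N}$ with no monochromatic subset-sums-and-products configuration --- which is precisely the decades-old open problem, and your second paragraph is a plan (interleave multiplicative and additive colorings in the spirit of \cite{pairwise} and \cite{maria}) with no construction and no mechanism for why it should defeat all large $n$ at once. So the ``main obstacle'' you identify is in fact the whole statement; there is no proof here to compare with anything.

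Moreover, the reduction on which you base the claim that the $P_d$ formulation ``adds nothing in logical strength'' is the questionable part. The pullback direction is fine: composing a bad coloring of $\mathbb{N}$ with an evaluation homomorphism $P_d\to\mathbb{N}$ at positive integers does give a bad coloring of $P_d$, so the conjecture is certainly no harder than refuting \Cref{hindman}. But the restriction-to-constants direction, which is what yields your claimed equivalence, would make the conjecture literally equivalent to the falsity of \Cref{hindman}. That cannot be the intended reading: the paper presents the $P_d$ conjectures as obstructions to a restricted class of proof techniques --- those whose witnesses are formal, bounded-degree polynomial expressions in auxiliary parameters, which is exactly why ``something like polynomial van der Waerden is necessary'' --- while simultaneously expressing hope of \emph{proving} \Cref{hindman}. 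For the conjecture to serve that purpose it must be consistent with \Cref{hindman} being true, i.e.\ the constants must not transport the full sum-and-product structure of $\mathbb{N}$ into the relevant configurations (note that a substitute embedding such as $k\mapsto kx_1$ respects sums but not products, so no transfer is automatic). Thus your equivalence either misreads what partition regularity over $P_d$ is meant to capture, or, taken at face value, converts the author's conjecture into the assertion that \Cref{hindman} is false --- and in either case the only substantive step of your proposal remains an open problem, not a proof.
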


The main issue with applying the arguments from this paper straightforwardly to solve~\Cref{hindman} is that we cannot simply ``shift" $ac$ by $b$; this is because $b$ generally will not be divisible by $c$. In other words, to control the color of $P(ac;b)$, we would have to control the color of $P(a;\frac{b}{c})$. However, if $b,b+c,b+2c$ are the same color, we cannot guarantee $c \mid b$.  The $2$-coloring where $3^xy$ for $y \equiv 1 \mod 3$ is red and $3^xy$ for $y \equiv 2 \mod 3$ is blue does not have three consecutive multiples of any integer $c$ in the same color.

Still, we believe it may be possible to get around this sort of obstacle.  We expect that~\Cref{prop:main} is true over $\N$ (and even over $P_{\infty}$).  One idea to prove it is to have many $a_nc_n$ with common difference $b$, so that we can perform the ``shift" update $ac \leftarrow ac+b$, at the cost of also updating and renaming $a$ and $c$.  It seems a little difficult to do so many updates while maintaining the other conditions involving $a$ and $c$ (and all of the other variables), but perhaps it is possible if there are e.g. large sets $S_a$ of possible $a$ and $S_c$ of possible $c$ with some nice properties (and $S_x$ for the other variables $x$).  Another idea is to consider more general polynomial expressions such as the aforementioned $Q(d)(x+P_i(d))$ for general $Q$ with $Q(0)=P_i(0)=0$, rather than restricting ourselves to monic $Q$.  We propose a final conjecture, which arose in conversation with Zach Hunter, and which follows from the $n=2$ case of~\Cref{hindman} but to the best of our knowledge is still unsolved.  In this conjecture, we do not require that $x$ and $y$ are the same color.

\begin{conjecture}
	If $\mathbb{N}$ is finitely colored, there exist $x$ and $y$ so that $x$ and $x+y$ are the same color, and furthermore $y$ and $xy$ are the same color.
\end{conjecture}

\section{Acknowledgments}

Thanks to Noga Alon, Matthew Bowen, Matija Bucic, Timothy Gowers, Neil Hindman, Zach Hunter, Maria Ivan, Noah Kravitz, Imre Leader, Shachar Lovett, Marcin Sabok, GPT-4, and two anonymous readers for helpful comments.

\end{document}